\documentclass{article}

\usepackage{xcolor} 
\usepackage{hyperref}

\hypersetup{
    colorlinks=true,        
    linkcolor=blue,         
    citecolor=purple,        
    urlcolor=magenta,       
}

\usepackage{amsthm}
\usepackage{amsmath,amssymb,amsfonts}
\usepackage{bm}
\usepackage{graphicx}
\usepackage{epstopdf}
\usepackage{tikz,pgf}
\usetikzlibrary{patterns}
\usepackage{pgfplotstable}
\usepackage{booktabs}
\usepackage{caption}
\usepackage{subcaption}

\captionsetup{
  font=footnotesize,
  labelfont=sc,
  textfont=it
}
\usepackage{algorithmic}
\usepackage{listings}
\usepackage{xcolor}
\usepackage{multicol}
\usepackage[space,noadjust,nocompress]{cite}

\usepackage[
  left=25mm,
  right=25mm,
  top=30mm,
  bottom=30mm
]{geometry}

\numberwithin{equation}{section}
\numberwithin{figure}{section}
\numberwithin{table}{section}
\captionsetup{
  font=small,
  labelfont=bf,
  labelsep=period
}
\newtheorem{thm}{Theorem}[section]

\newtheorem{cor}[thm]{Corollary}
\newtheorem{lem}[thm]{Lemma}

\newtheorem{exa}[thm]{Example}


\newcommand{\sci}[2]{#1 \cdot 10^{#2}}


\title{
Computing exponential of tridiagonal Toeplitz matrices with applications to numerical solution of the heat equation
}

\author{
Mehdi Tatari\footnotemark[2]
\and
Majed Hamadi\footnotemark[3]
}

\newsavebox\verbbox
\date{}

\begin{document}

\maketitle

\renewcommand{\thefootnote}{\fnsymbol{footnote}}

\footnotetext[2]{Department of Mathematical Sciences, Isfahan University of Technology, Isfahan, Iran. ({\tt  mtatari@iut.ac.ir})}
\footnotetext[3]{Department of Mathematical Sciences, Sharif University of Technology, Tehran, Iran. ({\tt majed.hamadi77@sharif.edu})}

\begin{abstract}
The computation of the exponential of a tridiagonal matrix and its applications have always been of interest. One application considered here is when the method of lines is used to solve the heat equation, where the equation is transformed into a system of ordinary differential equations (ODEs), and this system has a solution that depends on the exponential of a tridiagonal Toeplitz matrix. Strang and MacNamara [SIAM Review, 56 (2014), pp. 525--546] presented an approximate method for computing the exponential of a symmetric tridiagonal Toeplitz matrix that appears in the solution of ODEs. Their method is based on approximating the entries of the exponential matrix with modified Bessel functions of the first kind at certain values, and accordingly, the exponential matrix is decomposed as the difference of a Toeplitz matrix and a Hankel matrix.  Here, we aim to extend this idea to the general case of tridiagonal Toeplitz matrices and stabilize the method by approximating the matrix exponential with a banded matrix, which makes the complexity of computing the exponential matrix independent of the matrix size. Additionally, we provide an error analysis for these methods and a bound for the entries of the exponential of the tridiagonal Toeplitz matrices. As a main contribution of this work, the idea is implemented to solve the heat equation, and the uniform stability of the method is proved. By using a splitting approach, the method is generalized for two-dimensional problems. Numerical illustrations demonstrate the efficiency of the new methods and bounds.
\end{abstract}

\textbf{Key words.} tridiagonal Toeplitz matrix, matrix exponential, modified Bessel functions of the first kind, heat equation

\textbf{AMS subject classifications.} 65F60, 65M06, 65M12

\section{Introduction}

Computation of the exponential of a matrix is an important problem because it has widespread applications in science and engineering \cite{BERGAMASCHI-CALIARI,Bottcher-Gutierrez,Crespo-Gutierrez,Bottcher-Crespo-Gutierrez, Lee}. Indeed, the matrix exponential is by far the most studied matrix function. Its study motivation stems from its key role in the solution of differential equations \cite{BERGAMASCHI-CALIARI,BERGAMASCHI-VIANELLO,N.Higham}. More precisely, the solution of a differential equation at time $t$ is usually expressed as $\exp(\mathbf{A}t)$ applied to a vector. The precise task and qualitative properties of the solution to a differential equation are important factors that affect the choice of method for computing the matrix exponential. There are some methods for computing the exponential of a matrix, such as the power series method, Jordan and Schur forms, and Cauchy integral approaches \cite{N.Higham}. Other well-known techniques for computing $\exp(\mathbf{A})$ include the scaling and squaring scheme, Padé approximation, interpolation method, and other methods using the Krylov subspace \cite{N.Higham,GOLUB-VAN,Moler,Kressner-Luce, Al-Mohy,Ben_Raz,Cortinovis}.  

Exponential of tridiagonal Toeplitz matrices play a crucial role in various scientific modelling \cite{Noschese}. These matrices appear in the discretization of the diffusion term and are ill-conditioned. In the numerical solution of differential equations, often a system with a tridiagonal Toeplitz matrix  needs to be solved. Although the system can be solved efficiently, the accuracy and qualitative properties of the resulting solutions remain challenging issues. Solutions based on the exponential matrix are an alternative approach that is considered, e.g., in \cite{Hairer}. In \cite{Hoseini}, a splitting method is presented for finding the exponential of tridiagonal matrices, which appear in the numerical solution of parabolic PDEs.

For the following heat equation
\[
u_t = au_{xx},
\]
we know mathematically (and by common experience, if $u$ represents, say, temperature) that $u(x,t)$ is bounded above and below by the extremes attained by the initial data and the values on the boundary up to time $t$. This fact is known as the maximum principle, and an engineering client for our computed results would be rather dismayed if they did not possess this property \cite{Morton}. In the numerical solution of differential equations, this property is achieved by stringent conditions on discretization parameters, especially for explicit methods. 

Here, based on the matrix exponential method presented in \cite{Strang}, we propose an explicit method for the heat equation using a banded approximation of this method. By using a splitting approach, we generalize the method to two-dimensional heat equation. We show that both methods preserve the maximum principle unconditionally. Also, we extend the study of the exponential of tridiagonal Toeplitz matrices to more general cases, including both symmetric and non-symmetric ones, and present error bounds for each. We show that these methods converge at a rate slightly faster than the geometric order and that the error bounds are sharp in numerical experiments. 

Furthermore, in order to improve the approximation methods and make them faster, we study bounds on the entries in the exponential of a tridiagonal matrix. Recently, many results have been established in this area, focusing on bounds for the entries of a function of a matrix, particularly for banded matrices \cite{Benzi-Golub, Benzi-Boito-Razouk, Ben_Boit, Ben_Raz, Iserles}. One of the results that closely related to the topic of this paper, provided by Iserles in \cite{Iserles}, gives a bound for the exponential of tridiagonal matrices. Additionally, in \cite{Benzi-Simoncini} the exponential of banded matrices was studied and some bounds for its entries were provided. Here, we provide a bound for the entries of the exponential of tridiagonal matrices that performs better in numerical results compared to the existing bounds. Using these studies, we approximate the exponential of tridiagonal matrices by banded matrices that use Bessel functions. We also use the Fast Fourier Transform (FFT) to compute more efficiently the multiplication of the approximation of the exponential of tridiagonal matrices by a vector, which is helpful in the method we provide for solving heat equations. 

The remainder of our work is organized as follows. We end this section with some needed notations.  In Section \ref{ESTTM}, we consider the problem of computing the exponential of tridiagonal Toeplitz matrices and provide an error analysis of this method. In Section \ref{sec:non-sym}, we present an approximation of the exponential of a non-symmetric tridiagonal matrix and analyze its error. In Section \ref{sec:decay}, we establish a bound for the entries of the matrix exponential and introduce a banded approximation method. In Section \ref{Method}, we present a numerical solution and analysis of the heat equation in one and two dimensions. 
In Section \ref{Nexamples}, we attest our study with numerical examples. Concluding remarks are presented in Section \ref{Conclusion}.

\subsubsection*{Notations} We define $\mathrm{Toeplitz}\{\mathbf{u}, \mathbf{v}\}$ as a Toeplitz matrix with $\mathbf{u}$ as its first column and $\mathbf{v}$ as its first row. If $\mathbf{u} = \mathbf{v}$, we simply use the notation $\mathrm{Toeplitz}\{\mathbf{u}\}$. We use $\mathrm{Hankel}\{\mathbf{v}\}$ for a Hankel matrix whose first column is $\mathbf{v}$ and whose last row is the \textit{reverted} $\mathbf{v}$. Finally, for integers $m$ and $n$ with $m < n$, we denote the set $\{ m, m + 1, \ldots, n-1,n \}$ by $\mathtt{m:n}$.

\section{Exponential of tridiagonal Toeplitz matrices}\label{ESTTM}
In this section, we first approximate the exponential of symmetric Toeplitz tridiagonal matrices. Let $\mathbf{T}_n \in \mathbb{C}^{n \times n}$ be a tridiagonal matrix defined as follows:
\begin{equation}\label{matrix_tridiag}
\mathbf{T}_n = \mathrm{tirdiag}(a,b,a) = 
\begin{bmatrix}
b& a &    \\
a & b& \ddots &  \\
 &\ddots   &\ddots&a \\
 &  & a & b \\
\end{bmatrix},
\end{equation}
where all sub-diagonal and super-diagonal elements in $\mathbf{T}_n$ are $a$, the main diagonal elements are $b$, and all other elements in $\mathbf{T}_n$ are zero.

Here, by using the eigenvalues of $\mathbf{T}_n$ and its eigenvectors, we aim to approximate the entries of $\exp(\mathbf{T}_n )$. Now, the eigenvalues of $\mathbf{T}_n$ are computed as follows:
\begin{align*}
 \lambda_k = b+ 2 a \cos\left(\dfrac{k\pi}{n+1} \right), \qquad k = 1 , \ldots , n,
\end{align*}
and an eigenvector corresponding to $ \lambda_k $ is given by $ v^{(k)} = \big(v_1^{(k)}, \ldots, v_n^{(k)}\big)^{T}$, where
\begin{align*}
v_j^{(k)} =\sin\left(\dfrac{j \pi k}{n+1}\right), \qquad j = 1 , \ldots , n.
\end{align*}
Then we can compute \(a_{ij}^n\) as the \(ij\)-th element of \(\exp(\mathbf{T}_n)\) as follows:
\begin{align*}
 a_{ij}^n &= \dfrac{2 \exp(b)}{n+1}\sum_{k=1}^n \exp\left(2a\cos\left(\dfrac{k\pi}{n+1}\right)\right) \sin\left(\dfrac{k \pi i}{n+1}\right) \sin\left(\dfrac{k \pi j}{n+1}\right)\nonumber\\
& =  \dfrac{\exp(b)}{n+1}\sum_{k=1}^n \exp\left(2a\cos\left(\dfrac{k\pi}{n+1}\right)\right) \cos\left(\dfrac{k \pi (i-j)}{n+1}\right) \\
&- \dfrac{\exp(b)}{n+1}\sum_{k=1}^n \exp\left(2a\cos\left(\dfrac{k\pi}{n+1}\right)\right) \cos\left(\dfrac{k \pi (i+j)}{n+1}\right)\nonumber.
\end{align*}
Then, we can easily see that for large values of \( n \), we can approximate \( a_{ij}^n \) as follows:
\begin{equation}\label{app}
\lim_{n \rightarrow +\infty} a_{ij}^n = e^b\big( I_{i-j}(2a) - I_{i+j}(2a)\big),
\end{equation}
where \( I_k(\cdot) \) are the modified Bessel functions of the first kind for \( k \in \mathbb{Z} \), which are defined as follows:
\[
I_k(x) = \sum_{s=0}^{\infty} \dfrac{1}{s! (k+s)!}\left(\dfrac{x}{2}\right)^{k+2s},
\]
and we can represent it in integral form as indicated in \cite{Arfken}:
\[
I_k(x) = \dfrac{1}{\pi}\int_0^\pi e^{x\cos(\theta)}\cos(k\theta) \mathrm{d}\theta.
\]

According to \eqref{app}, we can approximate $\exp(\mathbf{T}_n)$ entirely, as shown below:
\begin{align}\label{main}
\exp(\mathbf{T}_n)\approx\text{Toeplitz}\lbrace\mathbf{u}_n\rbrace-\text{Hankel}\lbrace\mathbf{v}_n\rbrace,
\end{align}
where $\mathbf{u}_n$ and $\mathbf{v}_n$ are vectors of length $n$, defined as follows:
\[\mathbf{u}_n=\big(e^bI_0(2a),\ldots,e^b I_{n-1}(2a)\big),~~~~~\mathbf{v}_n=\big(e^bI_{2}(2a),\ldots,e^bI_{n+1}(2a)\big).\]

Finally, in the rest of the discussion, we use the notation $\Psi(\mathbf{T}_n) \in \mathbb{C}^{n \times n}$ as defined in the following:
\begin{align}\label{approx_matrix}
\Psi(\mathbf{T}_n) = \text{Toeplitz}\lbrace\mathbf{u}_n\rbrace-\text{Hankel}\lbrace\mathbf{v}_n\rbrace.
\end{align}

In the rest of the paper, the method will be analyzed and extended for more complete cases that appear in the numerical solution of differential equations.

\subsection{Error analysis}\label{error:analysis:new}
In this section, we aim to establish an error bound for the approximation presented in the previous section, as given in \eqref{main}. Let \( v \) be a real or complex \( 2\pi \)-periodic function on the real line, and define
\[
I := \int_0^{2\pi}v(\theta)\mathrm{d}\theta.
\]
For any positive integer \( N \), the trapezoidal rule for approximating the integral takes the following form:
\[
T_N=\frac{2\pi}{N}\sum_{k=1}^Nv\left(\frac{2\pi k}{N}\right).
\]

The following theorem, proved in \cite[Page 211]{Kress-book}, provides a sharp error bound for the trapezoidal rule in approximating the integral of $2\pi$-periodic functions. This theorem is also discussed in \cite{Trefethen}.

\begin{thm}[\cite{Kress-book}]\label{trap}
Suppose $v$ is $2\pi-$periodic and analytic and satisfies $|v(\theta)|\leq M$ in the strip $ -s <Im (\theta)<s$ for some $s>0$. Then for any $N \geq 1$,
\[
|I-T_N|\leq\frac{4\pi M}{\exp(sN)-1},\\
\]
and the constant $4\pi$ is as small as possible.
\end{thm}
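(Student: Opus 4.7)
The plan is to pass to the Fourier series representation of $v$, use the discrete orthogonality of roots of unity to express $I_N$ in terms of aliased Fourier coefficients, and then exploit analyticity in the strip to bound those coefficients exponentially.

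First I would write the Fourier expansion $v(\theta)=\sum_{k\in\mathbb{Z}}c_k e^{ik\theta}$, which converges uniformly because $v$ is analytic and $2\pi$-periodic. Then $I=\int_0^{2\pi}v(\theta)\,d\theta=2\pi c_0$. For the trapezoidal sum, the key identity is
\[
\frac{1}{N}\sum_{k=1}^{N}e^{im\theta_k}=\begin{cases}1 & N\mid m,\\ 0 & \text{otherwise},\end{cases}
\]
so term-by-term summation gives $I_N=2\pi\sum_{j\in\mathbb{Z}}c_{jN}$. Subtracting yields
\[
I-I_N=-2\pi\sum_{j\neq 0}c_{jN}.
\]

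Next I would estimate $|c_k|$ using analyticity. Writing $c_k=\frac{1}{2\pi}\int_0^{2\pi}v(\theta)e^{-ik\theta}\,d\theta$ and shifting the contour vertically to $\mathrm{Im}\,\theta=\mathrm{sign}(k)(s-\varepsilon)$ for any $\varepsilon>0$, periodicity cancels the two vertical segments and the hypothesis $|v|\le M$ inside the strip gives $|c_k|\le M e^{-(s-\varepsilon)|k|}$. Letting $\varepsilon\to 0^+$ yields $|c_k|\le M e^{-s|k|}$. Substituting into the error representation and summing the geometric series,
\[
|I-I_N|\le 2\pi\sum_{j\neq 0}|c_{jN}|\le 4\pi M\sum_{j=1}^{\infty}e^{-sjN}=\frac{4\pi M}{e^{sN}-1},
\]
which is the claimed bound.

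For the sharpness of the constant $4\pi$, the plan is to exhibit a family of test functions that saturate the estimate asymptotically. A natural candidate is a scaled Poisson-kernel-type function whose Fourier series is concentrated on the multiples of $N$, for instance $v_\alpha(\theta)=\mathrm{Re}\bigl[(1-\alpha e^{iN\theta})^{-1}\bigr]$ with $\alpha=e^{-sN}$: its $M$ in the strip approaches $1/(1-\alpha)\cdot$ (a controllable factor), while $I-I_N$ picks up exactly the geometric tail $2\pi\sum_{j\ge1}\alpha^j$, matching the upper bound in the limit. The main obstacle I anticipate is this sharpness half: the inequality itself follows cleanly once the Fourier machinery is in place, but producing an extremal family that makes the constant $4\pi$ tight (rather than $2\pi$ or some other factor) requires carefully tuning the ratio between the $\sup$-norm in the strip and the aliased coefficient mass, and verifying the bound is approached in the limit as the parameter degenerates toward the boundary of the strip.
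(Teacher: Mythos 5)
The paper does not prove this theorem at all: it is quoted verbatim from the cited reference of Trefethen and Weideman, so there is no in-paper argument to compare against. Measured against the standard proof (which is the one in that reference), the first half of your proposal is correct and is essentially that proof: expand $v$ in a Fourier series, use the discrete orthogonality $\frac{1}{N}\sum_{k=1}^N e^{im\theta_k}=1$ if $N\mid m$ and $0$ otherwise to get the aliasing identity $I-I_N=-2\pi\sum_{j\neq 0}c_{jN}$, bound $|c_k|\leq Me^{-s|k|}$ by shifting the contour and letting $\varepsilon\to 0^+$, and sum the geometric series. (One sign slip: for $k>0$ you must shift to $\mathrm{Im}\,\theta=-(s-\varepsilon)$, not $+(s-\varepsilon)$, since $|e^{-ik\theta}|=e^{k\,\mathrm{Im}\,\theta}$.)

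The genuine gap is in the sharpness half, and it is not merely a matter of ``careful tuning.'' Your candidate $v_\alpha(\theta)=\mathrm{Re}\bigl[(1-\alpha e^{iN\theta})^{-1}\bigr]$ with $\alpha=e^{-sN}$ fails in two ways. First, it is not bounded in the open strip: at $\mathrm{Im}\,\theta\to -s$ with $N\,\mathrm{Re}\,\theta\equiv 0 \pmod{2\pi}$ the factor $\alpha e^{iN\theta}$ tends to $1$ and the function blows up, so $M=\infty$ and the example certifies nothing. Second, even after regularizing ($\alpha<e^{-sN}$), this family cannot reach the constant $4\pi$: its aliased coefficients are $c_{\pm jN}=\alpha^j/2$, so $|I-I_N|=2\pi\alpha/(1-\alpha)$, while already on the real axis $|v_\alpha(0)|=1/(1-\alpha)$, which forces $M\geq 1/(1-\alpha)$ and hence $|I-I_N|\leq 2\pi(1-\alpha)\,M/(e^{sN}-1)$. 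That establishes at best the constant $2\pi$. The whole point of the sharpness claim is the extra factor of $2$: one needs a family whose supremum $M$ over the strip is attained only near the edges $\mathrm{Im}\,\theta=\pm s$ and is asymptotically no larger than the scale of the individual aliased coefficients, and one must then verify that the ratio tends to $1$ in a suitable limit (e.g.\ $sN\to\infty$). As written, the sharpness assertion in the theorem is unproved.
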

Now, suppose we want to approximate the following integral:
\begin{equation}\label{integral:2}
I_{ij}:=\frac{1}{2\pi}\int_0^{2\pi}\exp(b+2a\cos(\theta))\sin(i\theta)\sin(j\theta)\mathrm{d}\theta,
\end{equation}
with the following trapezoidal rule:
\[
T^{(i,j)}_{2n+2}:=\dfrac{1}{2n+2}\sum_{k=1}^{2n+2} \exp\left(b+2a\cos\left(\dfrac{2 k\pi}{2n+2}\right)\right) \sin\left(\dfrac{2 k \pi i}{2n+2}\right) \sin\left(\dfrac{2 k \pi j}{2n+2}\right).
\]

In integral \eqref{integral:2} the integrand function, $v(\theta)$, is $(1/2\pi)\exp(b+2a\cos(\theta))\sin(i\theta)\sin(j\theta)$. In this case, to apply Theorem \ref{trap}, the maximum value of $v(\theta)$ in the strip $-s \leq \operatorname{Im}(\theta) \leq s$ occurs at $\theta = \pm \mathbf{i}s$, where an upper bound for the integrand is given by:
\[
\vert v(\theta) \vert \leq (1/2\pi)\exp(\operatorname{Re}(b)+2|a|\cosh(s))\cosh(is)\cosh(js),
\]
and by using Theorem \ref{trap}, we obtain:
\begin{equation}\label{ineq:nn}
\left|I_{ij} -T^{(i,j)}_{2n+2} \right| \leq \frac{2\exp\left(\operatorname{Re}(b)+2|a|\cosh(s)\right)\cosh(is)\cosh(js)}{\exp(s(2n+2))-1},
\end{equation}
where this holds for all values of $s$. 
Since the following equality holds:
\[
\left| \frac{2}{\pi}\int_0^{\pi}\exp(b+2a\cos(\theta))\sin(i\theta)\sin(j\theta)\mathrm{d}\theta-a^n_{i,j}  \right|  = 2\left|I_{ij} -T^{(i,j)}_{2n+2} \right|,
\]
thus, from \eqref{ineq:nn}, it follows that:
\begin{equation}\label{est}
\begin{aligned}
\left | \frac{2}{\pi}\int_0^{\pi}\exp(b+2a\cos(\theta))\sin(i\theta)\sin(j\theta)\mathrm{d}\theta-a^n_{i,j}  \right |\leq \frac{4e^{\left(\operatorname{Re}(b)+2|a|\cosh(s)\right)}\cosh(is)\cosh(js)}{\exp(s(2n+2))-1}.
\end{aligned}
\end{equation}

The approximation $\Psi(\mathbf{T}_n)$ allows us to replace the elements of the exponential matrix with only $ n+2$ modified Bessel functions, which can be computed using a standard command in software or the trapezoidal rule with $ m \ll n$ quadrature points, independent of $n$.

We note that the matrix $\mathbf{T}_n$ is a per-symmetric matrix, which implies that both $ \exp(\mathbf{T}_n) $ and $\Psi(\mathbf{T}_n)$ are also per-symmetric. Thus, the entries satisfying $i + j > n + 1$ can be mapped to corresponding entries $(i', j')$ such that $i' + j' \leq n + 1$, having the same value in 
\begin{equation}\label{Error:matrix}
\mathbf{Er}(\mathbf{T}_n) := \exp(\mathbf{T}_n) - \Psi(\mathbf{T}_n).
\end{equation}
This fact allows us to apply the bound obtained for the $(i',j')$ entry in \eqref{est} to the $(i,j)$ entry as well, since they have the same values in $\mathbf{Er}(\mathbf{T}_n)$.

Let $\Vert \cdot \Vert_{\infty}$ denote the maximum absolute row sum of a matrix. Now, we present the following theorem.
\begin{thm}\label{approx2}
Let $ \mathbf{T}_n = \mathrm{tridiag}(a,b,a) \in \mathbb{C}^{n \times n} $, and let $ \mathbf{Er}(\mathbf{T}_n) $ be the matrix defined in \eqref{Error:matrix}. Then, for any $ n\geq 1 $, 
\begin{equation}\label{bound:err:estim:Gene}
\|\mathbf{Er}(\mathbf{T}_n)\|_{\infty}\leq \dfrac{4e^{\left(\operatorname{Re}(b)+2|a|\cosh(s)\right)}}{e^{s(2n+2)}-1} \Big(G_n(s)+G_n(-s)\Big).
\end{equation}
where $G_n(s) = \frac{e^{s(n+2)}-e^{2s}}{e^{s}-1}$, for $s>0$.
\begin{proof}
First, in \eqref{est}, we have $2\cosh(is)\cosh(js) = \cosh((i+j)s) + \cosh((i-j)s)$. Using the fact that the matrix $\mathbf{Er}(\mathbf{T}_n)$ is both symmetric and per-symmetric, the infinity norm of this matrix is therefore less than ``twice'' the sum of the error bounds in \eqref{est} for $i + j$ and $j - i$ ranging from 2 to $n + 1$. Thus, we have: 
\begin{align*}\label{lem:ineq:err}
\|\mathbf{Er}(\mathbf{T}_n)\|_{\infty}&\leq \dfrac{8e^{\left(\operatorname{Re}(b)+2|a|\cosh(s)\right)}}{e^{s(2n+2)}-1} \sum_{k=2}^{n+1}\cosh(ks)\\
&= \dfrac{4e^{\left(\operatorname{Re}(b)+2|a|\cosh(s)\right)}}{e^{s(2n+2)}-1} \left(G_n(s)+G_n(-s)\right),
\end{align*}
and the proof is complete. 
\end{proof}
\end{thm}

Theorem \ref{approx2} introduces a family of error bounds for the infinity norm of matrix $\mathbf{Er}(\mathbf{T}_n)$, depending on the parameter $s$. One suitable choice for $s$ in the bound \eqref{bound:err:estim:Gene} is the value $s = \ln((n+1)/|a|)$, where $(n+1) > |a|$, which performs sharply in numerical examples. However, in the next corollary, by using this value of $s$, we aim to simplify the bound from the previous theorem and make it more efficient.

\begin{cor}\label{approx}
Let $ \mathbf{T}_n = \mathrm{tridiag}(a,b,a) \in \mathbb{C}^{n \times n} $ be as defined in \eqref{matrix_tridiag}, and let $ \mathbf{Er}(\mathbf{T}_n) $ be the matrix defined in \eqref{Error:matrix}. Then, for large $ n $, 
\begin{equation}\label{bound:err:estim}
\|\mathbf{Er}(\mathbf{T}_n)\|_{\infty}\lesssim 2e^{\operatorname{Re}(b)}\left(\dfrac{|a|e}{n+1}\right)^{n+1}.
\end{equation}
\begin{proof}
Let $s = \ln((n+1)/|a|)$, where $n+1 > |a|$. Thus, by substituting the chosen value of $s$ into the hyperbolic cosine in \eqref{est}, we obtain $\cosh(ps) \sim 0.5 \big((n+1)/|a|\big)^p$ for any $p > 0$ as $n \to \infty$. Therefore,
{\small
\[
\dfrac{4e^{\operatorname{Re}(b)+2|a|\cosh(s)}\cosh(is)\cosh(js)}{e^{s(2n+2)}-1} \sim e^{\operatorname{Re}(b)}\left(\frac{|a|^2e}{(n+1)^2}\right)^{n+1}\left(\dfrac{n+1}{|a|}\right)^{i+j},~~~~~~~~n\rightarrow \infty.
\]}
Now, similar to the proof of Theorem \ref{approx2}, and using the fact that the matrix $\mathbf{Er}(\mathbf{T}_n)$ is per-symmetric, we have: \begin{align*}
\|\mathbf{Er}(\mathbf{T}_n)\|_{\infty}&\lesssim 2e^{\operatorname{Re}(b)}\left(\frac{|a|^2e}{(n+1)^2}\right)^{n+1}\sum_{k = 2}^{n+1}\left(\dfrac{n+1}{|a|}\right)^{k}\\
&\sim 2e^{\operatorname{Re}(b)}\left(\frac{|a|^2e}{(n+1)^2}\right)^{n+1}\left(\dfrac{n+1}{|a|}\right)^{n+1},~~~~~~~~~~~n\rightarrow \infty,
\end{align*}
and the proof is complete. 
\end{proof}
\end{cor}

According to the Corollary \ref{approx}, it is clear that $a$ has a greater effect on the accuracy compared to $b$. Also, since $(n+1)^{n+1}$ appears in the denominator of the error bound, this causes the approximation to converge very quickly as $ n$ increases.

Let us show the effectiveness of the bound from Corollary \ref{approx} with a numerical example.
\begin{exa}\label{ex1:matrix}
Let us consider $\mathbf{T}_n = \mathrm{tridiag}(1,-2,1)$, a type of matrix that appears in the numerical solution of the heat equation. We compare the error bound obtained in Corollary \ref{approx} with the ``infinity norm'' of the approximation error, $\|\mathbf{Er}(\mathbf{T}_n)\|_{\infty}$, in Table \ref{table:comp}. The results in the table show how effectively the bound in \eqref{bound:err:estim} estimates $\|\mathbf{Er}(\mathbf{T}_n)\|_{\infty}$. For example, for $n = 10$, this estimate is  $\sci{5.68}{-8}$, which is not far from the actual error $\sci{4.34}{-9}$ .

\begin{table}
\caption{Infinity norm of $\mathbf{Er}(\mathbf{T}_n)$, where $\mathbf{T}_n = \mathrm{tridiag}(1,-2,1)$, and the error bound from Corollary \ref{approx}.}
\label{table:comp}
\centering
\pgfplotstabletypeset[
  every head row/.style={
    before row=\toprule, 
    after row=\midrule,
  },
  every last row/.style={after row=\bottomrule},
  highlightcell/.style={
    postproc cell content/.append code={
      \ifnum\pgfplotstablerow=#1\relax
      \pgfkeysalso{@cell content/.add={$\bf}{$}}%
      \fi
    }
  },
  columns={Name, 3,5}, 
  columns/Name/.style={column name=$n$, 
   column type=c|},
  columns/3/.style={column name=$\|\mathbf{Er}(\mathbf{T}_n)\|_{\infty}$, column type=c|},
  columns/5/.style={column name=Error bound,  column type=c},
]{
Name 3 5
     1      0.079934            0.5
     2      0.033868        0.20135
     3     0.0094558       0.057727
     4     0.0017886       0.012855
     5    0.00028477      0.0023405
     6    3.8759e-05     0.00036043
     7    4.6617e-06     4.8092e-05
     8    5.0165e-07     5.6612e-06
     9    4.8858e-08     5.9619e-07
    10    4.3445e-09     5.6802e-08
}
\end{table}
\end{exa}

Here, we conclude this section, and in the next section, we aim to find an approximation for non-symmetric Toeplitz tridiagonal matrices.

\section{Non-symmetric Toeplitz tridiagonal matrices}\label{sec:non-sym}
In this section, we consider the case of matrices of the form \( \mathbf{A}_n = \mathrm{tridiag}(a,b,c) \), which denotes a tridiagonal Toeplitz matrix with \( a \) on the subdiagonal, \( b \) on the main diagonal, and \( c \) on the super-diagonal, while all other entries are zero. 

The technique discussed in the previous section can be extended to $\mathbf{A}_n $ through a simple similarity transformation, as stated in the following theorem.

\begin{thm}\label{jotri}
For nonzero $a$ and $c$, the tridiagonal matrix $\mathbf{A}_n = \mathrm{tridiag}(a,b,c)$ satisfies the following similarity relation:
\[
\mathbf{A}_n = \mathbf{D}_n \mathbf{T}_n \mathbf{D}_n^{-1},
\]
where $\mathbf{D}_n=\mathrm{diag}\big(\rho_1,\ldots,\rho_n\big)$, $\rho_k =\left( \sqrt{\dfrac{a}{c}}\right)^{k-1}$, and $\mathbf{T}_n=\mathrm{tridiag}\left(c\sqrt{\dfrac{a}{c}}, b, c\sqrt{\dfrac{a}{c}}\right)$.
\end{thm}
\begin{proof}
The proof is easily obtained using matrix multiplication.
\end{proof}
Let $\mathbf{A}$ and $\mathbf{B}$ be two $n \times n$ matrices. The Hadamard product of $\mathbf{A}$
and $\mathbf{B}$  is defined by $ (\mathbf{A}\circ \mathbf{B})_{ij} = (\mathbf{A})_{ij} (\mathbf{B})_{ij}$ for all $i,j\in\{1,\ldots,n\}$. In the following theorem, we use the Hadamard product to find an approximation for the elements of $\exp(\mathbf{A}_n)$ by utilizing the results from the previous section on the symmetric case.

\begin{thm}\label{Hadam}
Let $ \mathbf{A}_n = \mathrm{tridiag}(a, b, c) \in \mathbb{C}^{n \times n} $. The matrix $ \exp(\mathbf{A}_n) $ can be approximated by $\mathbf{F}_n \circ \Psi\left(\mathbf{T}_n \right)$ with the following error bound:
\[
\Vert \exp(\mathbf{A}_n) -  \mathbf{F}_n \circ \Psi\left(\mathbf{T}_n \right) \Vert_{\infty} \leq  2\Delta e^{\operatorname{Re}(b)}\left(\dfrac{|z|e}{n+1}\right)^{n+1},
\]
where $ \mathbf{T}_n = \mathrm{tridiag}(z, b, z) $, with $ z = c \sqrt{\frac{a}{c}} $, and $ \Psi(\mathbf{T}_n) $ as defined in \eqref{main}, and also $ \mathbf{F}_n = \mathrm{Toeplitz} \left\lbrace \mathbf{u}, \mathbf{v} \right\rbrace $, where $ \mathbf{u} = (a_0, a_1, \ldots, a_{n-1}) $, $ \mathbf{v} = (a_0, a_{-1}, \ldots, a_{-(n-1)}) $, with $ a_{k} = \left( \sqrt{\frac{a}{c}} \right)^{k} $, and $ \Delta = \max_{i,j} |(\mathbf{F}_n)_{ij}| $. 
\end{thm}
\begin{proof}
According to \eqref{main} and Theorem \ref{jotri}, we have
\[
\exp(\mathbf{A}_n) \simeq \mathbf{D}_n \Psi\big(\mathbf{T}_n \big)\mathbf{D}_n^{-1},
\]
also, the $ij$-th element of the right-hand side of the last approximation can be rewritten as follows:
\[
\dfrac{\rho_i }{\rho_j} (\Psi\big(\mathbf{T}_n))_{ij} = \left(\sqrt{\dfrac{a}{c}}\right)^{i-j}(\Psi\big(\mathbf{T}_n))_{ij}.
\]
Therefore, we can easily obtain that $\exp(\mathbf{A}_n) \simeq \mathbf{F}_n \circ \Psi(\mathbf{T}_n)$. For the error bound, we can use Corollary \ref{approx} and the fact that for matrices $C$ and $D$, we have $ \Vert C \circ D \Vert_{\infty} \leq \max_{i,j} |(C)_{ij}| \Vert D \Vert_{\infty}$. Then, the proof is complete.
\end{proof}
Although the previous theorem provides a method for computing the exponential of nonsymmetric Toeplitz matrices, there are challenges in its practical implementation. Regardless of the values of $a$, $b$, and $c$, some entries of $\mathbf{F}_n$ exceed $1$, and in computations for large values of $n$ may lead to numerical issues such as multiplying very large numbers by very small ones in $\mathbf{F}_n \circ \Psi(\mathbf{T}_n)$. On the other hand, computing each element of the exponential matrix is not necessary because they are significantly small. There are well-known results in this area, called the \textit{decay bound of matrix function elements} \cite{Ben_Boit, Benzi-Boito-Razouk, Benzi-Golub, Ben_Raz}. These results find uniform exponential decay bounds for elements of matrix functions. For our case of $ \exp(\mathbf{A}_n)$, there exist constants $C > 0$ and $0 < q < 1$, such that each $ij$-th element of this matrix is bounded by $Cq^{|j-i|}$. In the next section, inspired by the decay bound results, we present a new strategy for addressing these issues and approximating $\exp(\mathbf{A}_n)$ by a banded matrix.
\section{Decay estimates for exponential of tridiagonal matrices}\label{sec:decay}
Here, we show that the exponential of a tridiagonal matrix can be approximated by a banded matrix, which is leading to stabilization and a reduction in the computational complexity of the method. To achieve this, we first establish a bound for the entries of $\exp(\mathbf{T}_n)$ and then come back to the main problem. 
\subsection{Decay results}
Based on the results obtained in the previous sections, we can derive a bound for the entries of the exponential of tridiagonal matrices, which is presented in the following theorem.
\begin{thm}\label{thm:ineq_1first2}
For symmetric matrix $\mathbf{T}_n = \mathrm{tridiag}(a,b,a)$ and $i,j\in \{1,\ldots,n\}$:
\begin{align*}
\left\vert \big(\exp(\mathbf{T}_n)\big)_{ij} \right\vert &\leq 2e^{\operatorname{Re}(b)}\left(\dfrac{|a|e}{n+1}\right)^{n+1}+ e^{\operatorname{Re}(b)} \left(I_{|i-j|}(2|a|) +  I_{i+j}(2|a|)\right).
\end{align*}
\begin{proof}
The proof can be easily obtained using the following inequality:
\[
\left\vert \big(\exp(\mathbf{T}_n)\big)_{ij} \right\vert \leq  \Vert \exp(\mathbf{T}_n) - \Psi(\mathbf{T}_n) \Vert_{\infty}+\vert (\Psi(\mathbf{T}_n))_{ij} \vert,
\] 
and by applying Corollary \ref{approx} and the definition of $\Psi(\mathbf{T}_n)$ in \eqref{approx_matrix} to complete the proof.
\end{proof}
\end{thm}
Now, we focus on finding more natural bound that does not use Bessel functions, which is actually studied through the research conducted on the Bessel bounds. To begin, we need the following lemmas.
\begin{lem}[\cite{Yudell}]\label{lemYud}
For $x \in \mathbb{R}$ with $x > 0$ and $k > -\frac{1}{2}$, the Bessel functions of the first kind, $I_k(x)$, satisfy the inequality
\[
I_n(x)<\frac{x^k}{2^kk!}\exp(x).
\]
\end{lem}
Clearly, for any nonzero $x \in \mathbb{C}$ and $k \geq 0$, we have
\[
|I_k(x)|\leq I_k(| x |)<\frac{|x|^k}{2^kk!}\exp(|x|).
\]
This means that for a fixed $x \in \mathbb{C}$, as $k$ tends to infinity, $I_k(x)$ tends to $0$. This implies that for sufficiently large values of $n$, as the size of the matrix, the Bessel functions used in $\Psi(\mathbf{T}_n)$, which have very large indices, will be significantly small.
\begin{lem}[\cite{Nasell}]\label{lem_besl}
For $x \in \mathbb{R}$ with $x > 0$ and $k > -\frac{1}{2}$, the Bessel functions of the first kind, $I_k(x)$, satisfy the inequality
\[
I_{k+1}(x)<I_k(x).
\]
\end{lem}
By using Lemma \ref{lem_besl}, for all $x \in \mathbb{C}$ and $m, k \in \mathbb{N}$, if $m > k \geq 0$, then we have:
\[
\vert I_m(x) \vert <  I_k(\vert x \vert).
\]
Now, we can easily state a bound for the $ij$-th element of $\Psi(\mathbf{T}_n)$, as defined in \eqref{approx_matrix}, as follows:
\begin{equation}\label{bound_first_11}
\vert (\Psi(\mathbf{T}_n))_{ij} \vert 
\leq \exp(\mathrm{Re}(b)+2|a|) \left(\frac{|a|^{|i-j|}}{|i-j|!} +  \frac{|a|^{|i+j|}}{|i+j|!}\right). 
\end{equation}
Now, we present the following theorem.
\begin{thm}\label{M-Te_syme}
For symmetric matrix $\mathbf{T}_n = \mathrm{tridiag}(a,b,a)$ and $i,j\in \{1,\ldots,n\}$:
\begin{equation}\label{ineq_123}
\begin{aligned}
\left\vert \big(\exp(\mathbf{T}_n)\big)_{ij} \right\vert &\leq 2e^{\operatorname{Re}(b)}\left(\dfrac{|a|e}{n+1}\right)^{n+1}+ e^{(\operatorname{Re}(b)+2|a|)} \left(\frac{|a|^{|i-j|}}{|i-j|!} +  \frac{|a|^{|i+j|}}{|i+j|!}\right),
\end{aligned}
\end{equation}
and for non-symmetric tridiagonal matrix $\mathbf{A}_n = \mathrm{tridiag}(a,b,c)$ and $i,j\in \{1,\ldots,n\}$:
\begin{equation}\label{non-symmetric-case}
\begin{aligned}
\left\vert \big(\exp(\mathbf{A}_n)\big)_{ij} \right\vert &\leq 2\left|\sqrt{\frac{a}{c}} \right|^{i-j}e^{\operatorname{Re}(b)}\left(\dfrac{|z|e}{n+1}\right)^{n+1}\\
 &+ e^{(\operatorname{Re}(b)+2|z|)} \left|\sqrt{\frac{a}{c}} \right|^{i-j}\left(\frac{|z|^{|i-j|}}{|i-j|!} +  \frac{|z|^{|i+j|}}{|i+j|!}\right).
\end{aligned}
\end{equation}
where $ z = c \sqrt{\frac{a}{c}} $. 
\begin{proof}
By using Theorem \ref{thm:ineq_1first2} and the bound obtained in \eqref{bound_first_11}, the result is at hand.
To establish the inequality \eqref{non-symmetric-case}, we first need use the  following inequality:
\[
\vert\left(\mathbf{F}_n\circ \exp(\mathbf{T}_n) \right)_{ij} \vert =
\left|\sqrt{a/c} \right|^{i-j} \vert\left(\exp(\mathbf{T}_n) \right)_{ij} \vert,
\]
where $ \mathbf{T}_n = \mathrm{tridiag}(z,b,z)$, see Theorem \ref{jotri} and Theorem \ref{Hadam}. Then, using the inequality in \eqref{ineq_123} for $ \mathbf{T}_n$, the proof is easily completed.
\end{proof}
\end{thm}

Next, to clarify our last results, we provide an example for the bound of the elements found in Theorem \ref{M-Te_syme}.

\begin{exa}
Let us consider the matrix from Example \ref{ex1:matrix}, $ \mathbf{T}_n = \mathrm{tridiag}(1, -2, 1) $, with $ n = 100 $ as the size of $ \mathbf{T}_n $. Now, in Figure \ref{bound:12:ex2:trid}, we illustrate $ \left( \exp( \mathbf{T}_n) \right)_{ij} $ for some $ i $ and $ j $, and bound from Theorem \ref{M-Te_syme} in inequality $ \eqref{ineq_123} $. On the left side of Figure \ref{bound:12:ex2:trid}, we show the output for $ i = 1:20 $ and $ j = 1 $, and on the right side, we show it for $ i = 30:70 $ and $ j = 50 $. The results show that the bound from Theorem \ref{M-Te_syme} effectively estimates the exact values of the entries of the matrix $ \exp( \mathbf{T}_n) $.
\begin{figure}[t]
\begin{center}
\includegraphics[width=0.49\textwidth]{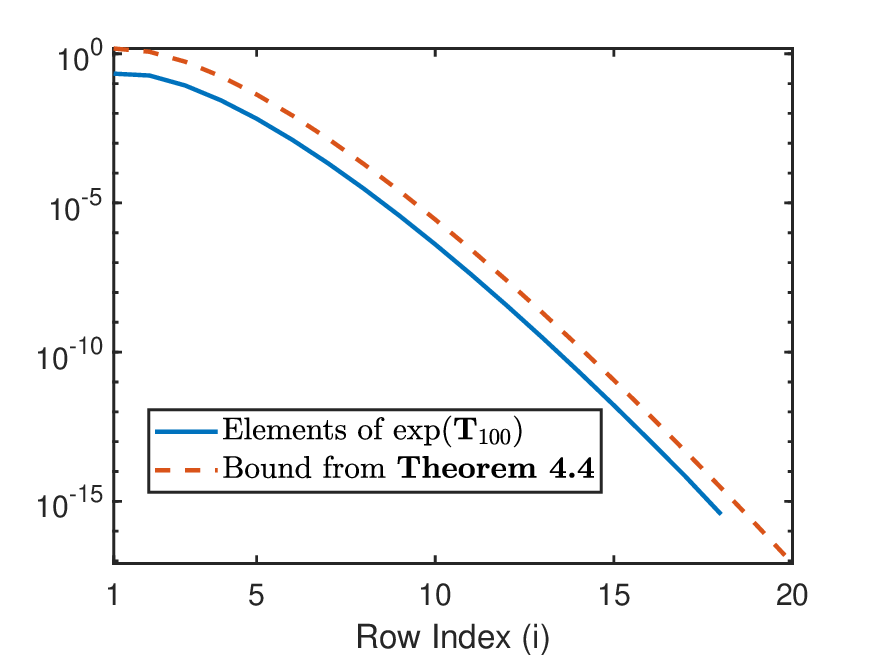}
\includegraphics[width=0.49\textwidth]{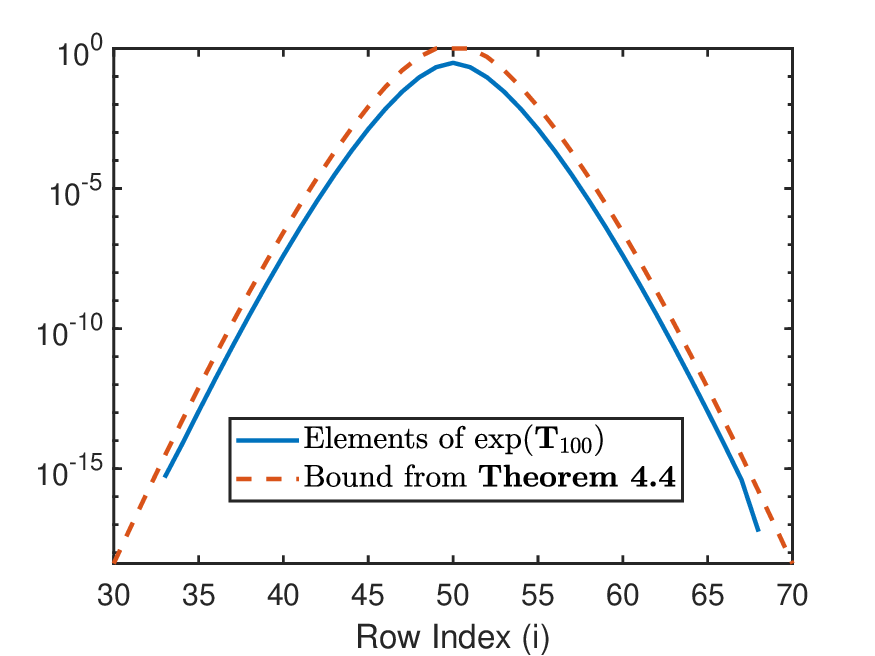}
\caption{\small
Elements \( \left( \exp( \mathbf{T}_n) \right)_{ij} \) and their bound in Theorem \ref{M-Te_syme} for \( n = 100 \), with \( i = 1:20 \) and \( j = 1 \) (left), and \( i = 30:70 \) and \( j = 50 \) (right).}\label{bound:12:ex2:trid}
\end{center}
\end{figure}
\end{exa}
It is worth mentioning that if instead of using the bound in Theorem \ref{M-Te_syme}, the bound in Theorem \ref{thm:ineq_1first2} is used to estimate the elements of exponential of tridiagonal matrices, the accuracy of the estimation will be significantly higher. (see Section \ref{M-V:bound:1})
\subsection{Banded approximation}\label{sec:band:approx}
Theorem \ref{M-Te_syme} not only provides a bound for the entries of the matrix exponential of a tridiagonal matrix but also clearly states that for sufficiently large matrices, the entries of matrix $\exp(\mathbf{T}_n)$ become significantly small as we move away from the main diagonal. This decay depends on the part of the theorem's bound that involves $|i - j|$. Therefore, in practice, we can ignore computing entries that their bounds from Theorem \ref{M-Te_syme} are smaller than a given machine epsilon $\varepsilon>0$. This means that we can approximate $\exp(\mathbf{T}_n)$ with a banded matrix. Now we can approximate $ \exp(\mathbf{T}_n) $ with a $ d $-banded matrix for some $ d>0 $, a matrix in which the elements corresponding to indices $ (i,j) $ with $ |i-j| > d $ are zero. This fact helps us make our approach more efficient. 

For example, set $\varepsilon = 10^{-16}$ and $\mathbf{T}_n = \mathrm{tridiag}(1,-2,1)$ with $n = 100$, the entries of the corresponding Toeplitz and Hankel matrices in \( \Psi(\mathbf{T}_n) \) with magnitudes larger than $\varepsilon$ are illustrated in Figure \ref{spy}. This figure shows that we can approximate $\mathbf{T}_n$ with a 19-banded matrix. Here, we approximate the matrix $\exp(\mathbf{T}_n)$ with a $d$-banded matrix, which we denote as $\Psi^{[d]}(\mathbf{T}_n)$.

\begin{figure}
\begin{center}
\includegraphics[width=4.3cm, height=4.3cm,]{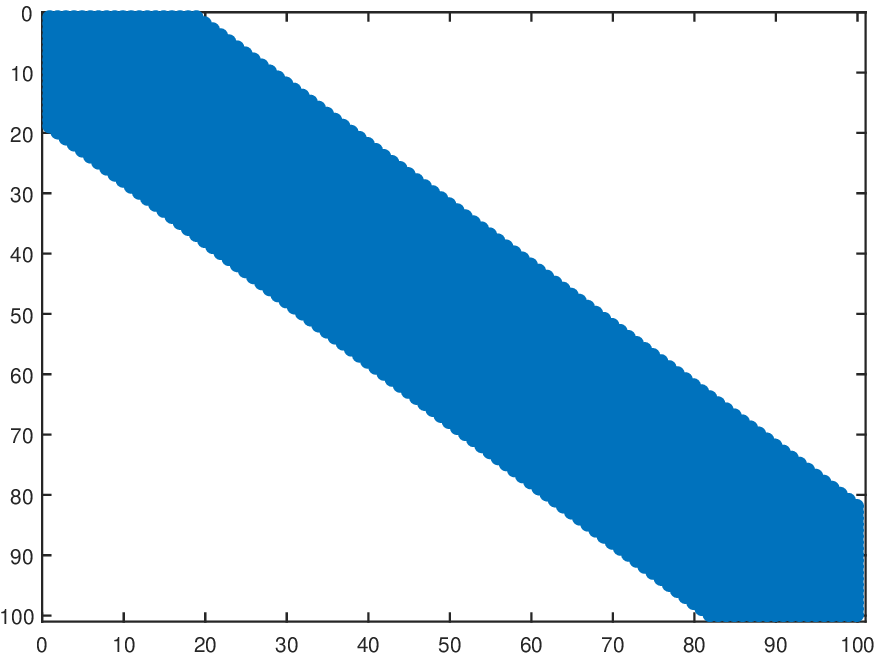}
\hspace*{1cm}
\includegraphics[width=4.3cm, height=4.3cm,]{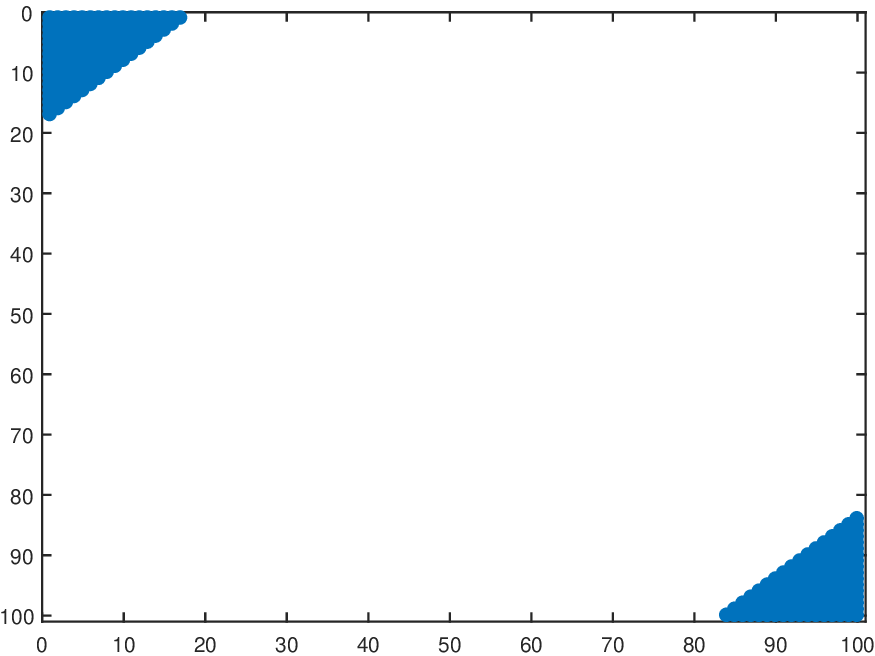}
\caption{Sparsity pattern of the 100-by-100 Toeplitz (left) and Hankel (right) matrices with elements having magnitudes larger than \( \varepsilon = 10^{-16} \), where these matrices are used in \( \Psi(\mathbf{T}_n) \) for \( \mathbf{T}_n = \mathrm{tridiag}(1,-2,1) \).}
\label{spy}
\end{center}
\end{figure}

For a given $d > 0$, let us define $\Psi^{[d]}(\mathbf{T}_n)$, where $\mathbf{T}_n = \mathrm{tridiag}(a, b, a)$. By using only modified Bessel functions of the first kind of index $0$ to $d+2$, i.e., $I_0(2a), I_1(2a), \ldots$, $I_d(2a), I_{d+1}(2a), I_{d+2}(2a)$, we define vectors of length $n$ as follows:
\begin{align*}
\mathbf{u}^{[d]}&=( e^bI_0(2a),e^bI_1(2a),\ldots,e^bI_{d-1}(2a),e^bI_{d}(2a),0,\ldots,0)\\
\mathbf{v}^{[d]}&=(e^bI_2(2a),\ldots,e^bI_{d+1}(2a),e^bI_{d+2}(2a),0,\ldots,0),
\end{align*}
such that $\Psi^{[d]}(\mathbf{T}_n)$ is given by the difference of a Toeplitz and a Hankel matrix:
\begin{equation}\label{banded:Psi:new}
\Psi^{[d]}(\mathbf{T}_n)=\mathrm{Toeplitz}\left\lbrace\mathbf{u}^{[d]}\right\rbrace-\mathrm{Hankel}\left\lbrace\mathbf{v}^{[d]}\right\rbrace.
\end{equation}
Here, $\Psi^{[d]}(\mathbf{T}_n)$ is a $d$-banded matrix as an approximation of $\exp(\mathbf{T}_n)$. Additionally, we can approximate $\exp(\mathbf{A}_n)$, where $\mathbf{A}_n = \mathrm{tridiag}(a, b, c)$, by using Theorem \ref{jotri}. First, by defining
$\mathbf{F}_n^{[d]} = \mathrm{Toeplitz}\left\lbrace\mathbf{x}^{[d]},\mathbf{y}^{[d]}\right\rbrace$, where 
\[
\mathbf{x}^{[d]} = (a_0, a_1, \ldots, a_{d},\underbrace{0,\ldots,0}_{\small n-d-1~\text{times}}),~~~~~~~~~\mathbf{y}^{[d]} = (a_0, a_{-1}, \ldots, a_{-d},\underbrace{0,\ldots,0}_{\small n-d-1~\text{times}}),
\]
with $a_{k} = \left( \sqrt{\frac{a}{c}} \right)^{k}$. So, we can approximate the exponential of non-symmetric tridiagonal matrices as follows:
\begin{equation}\label{band:non:sym:approx}
\exp(\mathbf{A}_n) \approx \mathbf{F}_n^{[d]} \circ \Psi^{[d]}(\mathbf{T}_n),
\end{equation}
where appropriate  $\mathbf{T}_n$ is defined in Theorem \ref{Hadam}.

Note that in $\Psi^{[d]}(\mathbf{T}_n)$, we only need to compute $d+3$ modified Bessel functions at $2a$ using the trapezoidal rule with $m$ quadrature points. Therefore, the complexity of the method is $\mathcal{O}(\max\{m,d\} \cdot d)$. Now, to obtain the error bound for this approximation, we estimate it as follows.
\begin{equation}\label{bound:error:d:band}
\begin{aligned}
\Vert \exp(\mathbf{T}_n) - \Psi^{[d]}(\mathbf{T}_n) \Vert_{\infty} &\leq \Vert \exp(\mathbf{T}_n) - \Psi(\mathbf{T}_n) \Vert_{\infty} +\Vert \Psi(\mathbf{T}_n) - \Psi^{[d]}(\mathbf{T}_n) \Vert_{\infty}\\
&\leq 2e^{\operatorname{Re}(b)}\left(\dfrac{|a|e}{n+1}\right)^{n+1} + \frac{4|a|^{d+1}}{(d+1)!} \exp(\operatorname{Re}(b)+3|a|).
\end{aligned}
\end{equation}
The bound \eqref{bound:error:d:band} easily obtain by utilizing Corollary \ref{approx}, Lemma \ref{lemYud}, and Lemma \ref{lem_besl}.
 Also, for \( \mathbf{A}_n = \mathrm{tridiag}(a,b,c) \), in the non-symmetric case, only \( \Delta \) as used in Theorem \ref{Hadam} is multiplied in the above bound, and \( a \) is replaced by \( z = c \sqrt{\frac{a}{c}} \). In Section \ref{sec:example:63}, we examined the effectiveness of bound in \( \eqref{bound:error:d:band} \) and tested it for different matrices.
\subsection{Fast matrix-vector product}\label{FFT:Section}
 The matrix-vector product with a Toeplitz matrix can be computed using the Fast Fourier Transform (FFT) in $\mathcal{O}(n \log(n))$ operations \cite{Bai-Demmel-Dongarra}. We note that $\Psi(\mathbf{T}_n)$ is a Toeplitz plus Hankel matrix. Therefore, for vector $\mathbf{v}$ the computation of $\Psi(\mathbf{T}_n)\mathbf{v}$ can be performed efficiently. The Toeplitz part can be handled using the FFT in $\mathcal{O}(n \log(n))$ operations. Similarly, the Hankel part can also be treated efficiently by decomposing it into the product of a Toeplitz matrix and a backward identity matrix. Using this decomposition, the multiplication of a vector by a Hankel matrix can also be carried out in $\mathcal{O}(n \log(n))$ operations. Thus, computing $\Psi(\mathbf{T}_n)\mathbf{v}$ requires only $\mathcal{O}(2n \log(n))$ operations.
 
\section{Approximation of the exponential of tridiagonal matrix and the heat equation}\label{Method}
In this section, using the studies conducted in the previous sections on tridiagonal matrices, we present an approximation of solution to the one-dimensional and two-dimensional heat equation that leads to a stable method.
\subsection{One-dimensional heat equation}\label{sec:heat:eq1}
Let us consider the one-dimensional heat equation as follows:
\begin{equation}\label{h1}
u_t = au_{xx} , \qquad x\in [b,c] , \qquad t>0 , \qquad a>0,
\end{equation}
with initial condition $u(x,0) = u_0(x)$ for $x \in [b,c]$, and homogeneous Dirichlet boundary conditions $u(b,t) = u(c,t) = 0$ for $t>0$.
Consider points $x_j$ in $[b,c]$ with $x_j = b + j \Delta x$, for $j = 1, \dots, n$, where $\Delta x = \frac{c-b}{n+1}$, and $n \in \mathbb{N}$.  By applying the central difference discretization of the second-order spatial derivative for $u_{xx}$ in \eqref{h1}, the equation can now be written as follows:
\[ \dfrac{ \mathrm{dU}_{j}}{\mathrm{dt}}= a\dfrac{\mathrm{U}_{j+1}-2\mathrm{U}_j + \mathrm{U}_{j-1}}{(\mathrm{\Delta} x)^2}, \quad j= 1,\ldots , n,\]
where $\mathrm{U}_{j}(t)$ is:
\begin{equation}\label{uj:initial:mjhjh}
\mathrm{U}_{j}(t)\approx u(x_j, t)\ ,\quad j= 1,\ldots , n.
\end{equation}
Now, the heat equation \eqref{h1} is transformed into the following system of ordinary differential equations:
\begin{equation}\label{eq:transf:heat}
\dfrac{\mathrm{d}\mathbf{U}}{\mathrm{dt}}=\mathbf{K}_{n}\mathbf{U},
\end{equation}
where $\mathbf{U}(t) = \left(\mathrm{U}_{1}(t),\ldots,\mathrm{U}_{n}(t)\right)^T$ that $\mathrm{U}_j(t)$ in \eqref{uj:initial:mjhjh}, and $\mathbf{K}_n= \mathrm{tridiag}(\frac{a}{(\Delta x)^2},\frac{-2a}{(\Delta x)^2},\frac{a}{(\Delta x)^2})$ in $\mathbb{R}^{n \times n}$.
Solution of the system in \eqref{eq:transf:heat} is:
\[{\mathbf{U}}=\exp(\mathbf{K}_{n}\mathrm{t})\mathbf{U}_0,\]
where $\mathbf{U}_0 = ( u_0(x_1),\ldots, u_0(x_n))^T$. Moreover, by choosing a time step size $\Delta t$ and defining $\mathbf{U}^{(k)} := \mathbf{U}(k \Delta t)$, the method can be written as:
\begin{equation}\label{eq:k:th:equa}
\mathbf{U}^{(k)}=\exp(\mathbf{T}_{n})\mathbf{U}^{(k-1)},
\end{equation}
where $\mathbf{T}_{n}=\text{tridiag}(\mu,-2\mu,\mu)$ and $\mu=\frac{a\Delta t}{(\Delta x)^2}$. As discussed in the previous sections, we can approximate $\exp(\mathbf{T}_{n})$ by $\Psi(\mathbf{T}_{n})$, and use FFT to compute $\Psi(\mathbf{T}_{n}) \mathbf{U}^{(k-1)}$ at each step with $\mathcal{O}(n \log(n))$ complexity, as detailed in Section~\ref{FFT:Section}.

Also, to reduce computational complexity, we can use the approximation $\Psi^{[d]}(\mathbf{T}_{n})$ from Section \ref{sec:decay}, in \eqref{banded:Psi:new}. The corresponding method to \eqref{eq:k:th:equa} for a suitable value of $d>0$ is:
\begin{equation} \label{num}
\mathbf{U}^{(k,d)}=\Psi^{[d]}(\mathbf{T}_{n})\mathbf{U}^{(k-1,d)},
\end{equation}
where $\Psi^{[d]}(\mathbf{T}_{n})$ is used instead of $\exp(\mathbf{T}_{n})$. The method presented in \eqref{num} provides an approximation of the solution to the heat equation in \eqref{h1} through its output, the vector $\mathbf{U}^{(k,d)}$. One advantage of method \eqref{num} is that it unconditionally preserves the maximum principle, which we aim to prove in the following.

Let $\Vert \cdot \Vert_{\infty}$ denote the maximum absolute value of elements in a vector. The following theorem establishes the stability of the numerical method \eqref{num}.

\begin{thm}\label{thm:stable:max}
Let $\mathbf{U}^{(k,d)}$ be the vector obtained from method \eqref{num} at the $k$-th step, and let $\mathbf{U}_0$ be the initial vector. They satisfy the following inequality:
\begin{equation}\label{max}
\|\mathbf{U}^{(k,d)}\|_{\infty}<\|\mathbf{U}_0\|_{\infty}.\nonumber
\end{equation}
In other words, the method in \eqref{num} preserves the maximum principle unconditionally.
\begin{proof}
First, from \eqref{num}, we have  
\[
\mathbf{U}^{(k,d)} = \left(\Psi^{[d]}(\mathbf{T}_{n})\right)^k \mathbf{U}_0.
\]  
Since $\Vert \Psi^{[d]}(\mathbf{T}_{n}) \Vert_{\infty} \leq \| \Psi(\mathbf{T}_{n}) \|_{\infty}$, we proceed with the proof to find a bound for $\| \Psi(\mathbf{T}_{n}) \|_{\infty}$. On the other hand, since the Bessel function satisfies $I_m(\cdot) = I_{-m}(\cdot)$ for integer values of $m$, and since $(\Psi(\mathbf{T}_n))_{ij} = \exp(-2\mu)\left(I_{i-j}(2\mu) - I_{i+j}(2\mu)\right)$, using the fact that the Bessel function is decreasing with respect to its index for positive arguments, i.e., $I_{m+1}(2\mu) \leq I_m(2\mu)$, we can conclude that $(\Psi(\mathbf{T}_n))_{ij} < \exp(-2\mu) I_{|i-j|}(2\mu)$. In this case, since for each row of $\Psi(\mathbf{T}_n)$ the term $\exp(-2\mu) I_{|i-j|}(2\mu)$ includes exactly one instance where the Bessel function has index $0$, and at most two instances with indices ranging from $1$ to $n-1$, we can say that  
\[
\| \Psi(\mathbf{T}_{n}) \|_{\infty} \leq \exp(-2\mu)\left(I_0(2\mu) + 2\sum_{p=1}^{n-1} I_p(2\mu)\right),
\]
and we have the property that the modified Bessel functions of the first kind satisfy in \cite{Arfken}:
\[
\exp(2\mu)=I_0(2\mu)+2\sum_{p=1}^{\infty}I_p(2\mu).
\]
Thus $\| \Psi(\mathbf{T}_{n}) \|_{\infty} <1$, and this completes the proof.
\end{proof}
\end{thm}





\subsection{ Two-dimensional heat equation}
Consider the two-dimensional heat equation:
\begin{equation}\label{h2}
u_t=a\nabla^2{ u},~~~\mathbf{x}=(x,y)\in \Omega\subset\mathbb{R}^2,~t>0,~~a>0,
\end{equation}
with the given boundary and initial conditions
\[u({\bf x},t)=0,~~~{\bf x}\in \partial {\Omega },~t>0,~~~~~~~~u({\bf x},0)=u_0({\bf x}),~~~{\bf x}\in \Omega.\]
Consider a uniform rectangular grid of points $(x_i, y_j)$ in the region $\Omega = (b,c) \times (d,e)$, where
$x_i = b + i\Delta x,$ for $i = 1, \dots, n_x,$ and
$y_j = d + j\Delta y,$ for $j = 1, \dots, n_y.$
Here, the discretization parameters in the $x$ and $y$ directions are given by
$\Delta x = \frac{c - b}{n_x + 1}$ and $\Delta y = \frac{e - d}{n_y + 1},$
with $n_x, n_y \in \mathbb{N}$.
Spatial discretization of (\ref{h2}) using central second-order formulas is performed for $i = 1, \ldots, n_x$ and $j = 1, \ldots, n_y$, as follows:
\[
\dfrac{\mathrm{dU}_{i,j}}{\mathrm{dt}}= a \dfrac{\mathrm{U}_{i+1,j}-2\mathrm{U}_{i,j} + \mathrm{U}_{i-1,j}}{(\Delta x)^2}+a\dfrac{\mathrm{U}_{i,j+1}-2\mathrm{U}_{i,j} + \mathrm{U}_{i,j-1}}{(\Delta y)^2}.
\]
where the approximate solution is denoted by $\mathrm{U}_{i,j}(t)$, and
\[
\mathrm{U}_{i,j}(t)\approx u(x_i, y_j, t)\ ,\quad i= 1,\ldots , n_x,~~ j= 1, \ldots , n_y,
\]
that here, time remains continuous.
We can now transform the heat equation to \cite{Quarteroni}:
\begin{equation}\label{sys2}
\dfrac{\mathrm{d}\mathbf{U}}{\mathrm{dt}}= \mathbf{A}_n\mathbf{U},
\end{equation}
where $n = n_x n_y$ and $\mathbf{A}_n \in \mathbb{R}^{n \times n}$, which the matrix $\mathbf{A}_n$ has block tridiagonal of the form 
$ \mathbf{A}_n = \mathrm{tridiag}(\mathbf{D}_{n_x}, \mathbf{S}_{n_x}, \mathbf{D}_{n_x}) $.
The matrix $\mathbf{D}_{n_x} \in \mathbb{R}^{n_x \times n_x}$ is diagonal, with diagonal entries $\frac{a}{(\Delta y)^2}$, while $\mathbf{S}_{n_x} \in \mathbb{R}^{n_x \times n_x}$ is the following symmetric tridiagonal matrix:
\[
\mathbf{S}_{n_x} = \mathrm{tridiag}\left(\frac{a}{(\Delta x)^2},-\frac{2a}{(\Delta x)^2}-\frac{2a}{(\Delta y)^2} ,\frac{a}{(\Delta x)^2}\right).
\]
Now, let us define the tridiagonal matrix
$ \mathbf{H}_{n_y} = \mathrm{tridiag}(1,0,1) \in \mathbb{R}^{n_y \times n_y}, $
then $\mathbf{A}_n$ can be written as:
\begin{equation}\label{Kroncker:pro:matrix}
\mathbf{A}_n =  \dfrac{a}{(\Delta y)^2}\mathbf{H}_{n_y} \otimes \mathbf{I}_{n_x}+\mathbf{I}_{n_y} \otimes \mathbf{S}_{n_x},
\end{equation}
where $\mathbf{I}_{n_i}$ is the identity matrix, and $\otimes$ is the Kronecker product. More studies on the exponential of the matrix $\mathbf{A}_n$ in \eqref{Kroncker:pro:matrix} have been conducted in \cite{Benzi-Simoncini}. The solution of system \eqref{sys2} is:
\[
\mathbf{U} = \left(\exp\left[\dfrac{a}{(\Delta y)^2}\mathbf{H}_{n_y}\mathrm{t}\right]\otimes \exp\left[\mathbf{S}_{n_x}\mathrm{t}\right]\right)\mathbf{U}_0.
\]
Let $\mu_x=\frac{a \Delta t}{(\Delta x)^2}$, $\mu_y=\frac{a \Delta t}{(\Delta y)^2}$, $\mathbf{T}_{n_x}=\text{tridiag}(\mu_x,-2\mu_x-2\mu_y,\mu_x)$, $\mathbf{K}_{n_y}=\text{tridiag}(\mu_y,0,\mu_y)$, and by choosing a time step size $\Delta t$, the numerical method is:
\[\mathbf{U}^{(k+1)} = (\exp[\mathbf{K}_{n_y}]\otimes \exp[\mathbf{T}_{n_x}])\mathbf{U}^{(k)}.\]
where $\mathbf{U}^{(k)} := \mathbf{U}(k \Delta t)$. More precisely, a splitting based on the Kronecker product is implemented for solving the two-dimensional heat equation. Due to the approximation method presented in Section \ref{ESTTM}, for suitable values of $d_x$ and $d_y$, the following numerical method can be considered:
\begin{equation}\label{app2}
\mathbf{U}^{(k+1,d_x,d_y)} = \left(\Psi^{[d_y]}(\mathbf{K}_{n_y})\otimes \Psi^{[d_x]}(\mathbf{T}_{n_x})\right)\mathbf{U}^{(k,d_x,d_y)}.
\end{equation}

Similar to Theorem \ref{thm:stable:max}, we can prove that:
\begin{align*}
\left\Vert \Psi^{[d_y]}(\mathbf{K}_{n_y})\otimes \Psi^{[d_x]}(\mathbf{T}_{n_x}) \right\Vert_{\infty}< 1,
\end{align*}
which shows the stability of the method in \eqref{app2} and that the method unconditionally preserves the maximum principle.
\section{Numerical examples}\label{Nexamples}
In this section, we present various numerical experiments to investigate quality of our methods, approximations, and error bounds. All experiments were conducted in MATLAB \texttt{R2021a}. We use the command ``$\mathtt{besseli}$'' to compute the values of the modified Bessel functions of the first kind. We compute quantities $\exp(A)$ and $(\exp(A))_{pl}$ by using the MATLAB command ``$\mathtt{expm}$'' to obtain the error of our approximations to machine precision.

\subsection{Iserles' bound for the entries of the exponential of tridiagonal matrices} 
Iserles, in \cite[Theorem 2.2]{Iserles}, established a bound for the entries of the exponential of tridiagonal matrices. To derive this bound, consider a tridiagonal matrix $A $ and define $\rho := \max_{i,j} |(A)_{ij}|$. By using modified Bessel functions of the first kind, $I_k(\cdot)$, he proved the following bound: 
\begin{equation}\tag{Iserles' bound} 
\vert (\exp(A))_{ij} \vert \leq \exp(\rho) I_{|i-j|}(2\rho). 
\end{equation} 
In this example, we aim to compare the bound in Theorem \ref{M-Te_syme} with Iserles' bound.
\begin{figure}[t]
\begin{center}
\includegraphics[width=0.49\textwidth]{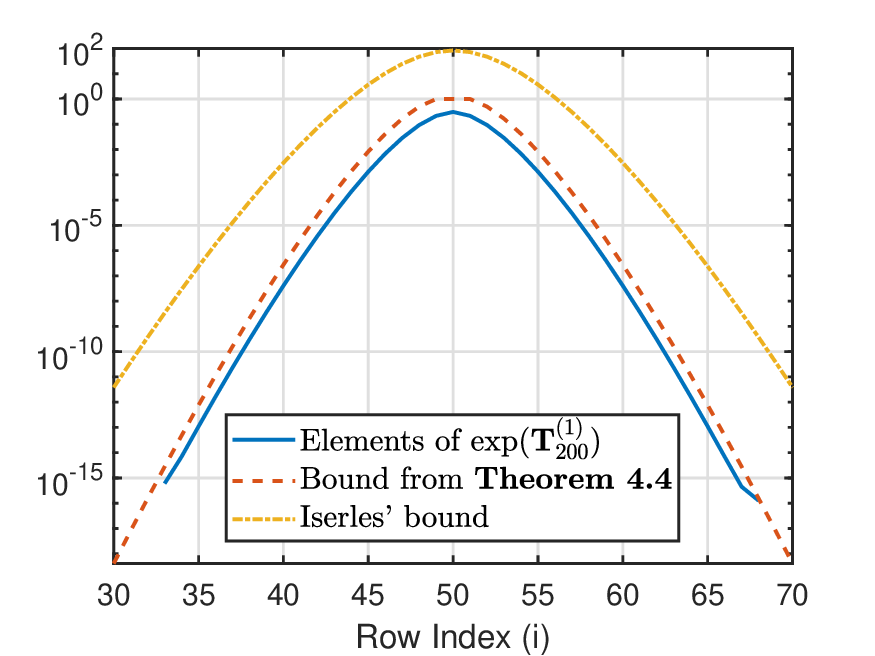}
\includegraphics[width=0.49\textwidth]{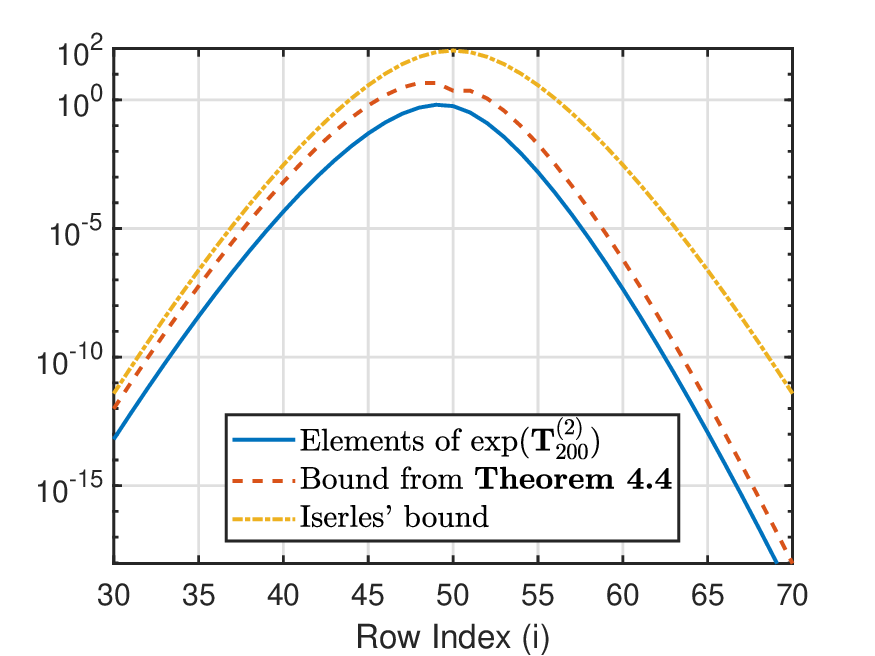}
\caption{Elements $|( \exp( \mathbf{T}_n))_{30:70,50}|$ and their Iserles' bounds, along with bounds from Theorem \ref{M-Te_syme} for  $\mathbf{T}^{(1)}_{200} = \mathrm{tridiag}(1,-2,1)$ (left) and for  $\mathbf{T}^{(2)}_{200} = \mathrm{tridiag}(1,-2,2)$ (right).}\label{bound:1_2:Fig2}
\end{center}
\end{figure}

In this experiment we consider two matrices, $\mathbf{T}^{(1)}_{200} = \mathrm{tridiag}(1,-2,1)$ and matrix $\mathbf{T}^{(2)}_{200} = \mathrm{tridiag}(1,-2,2)$. In Figure \ref{bound:1_2:Fig2}, we demonstrate the elements $(\exp(\mathbf{T}_{200}^{(r)}))_{30:70,50}$ and their bounds from Theorem \ref{M-Te_syme}, along with Iserles' bound. The result tells us that the bound from Theorem \ref{M-Te_syme} works better in estimating the elements. One of the reasons why the bound from the theorem works better may be that, in this bound, the sign of the elements of the main diagonal and the elements off the main diagonal, even if they have smaller absolute values, affect the bound, while in Iserles' bound, only the maximum absolute value of the elements of the matrix is considered.

\subsection{Bound for the entries of the exponential of a banded Hermitian matrix}\label{M-V:bound:1}
In \cite[Section 4]{Benzi-Simoncini}, the authors provided a bound for the entries of $\exp(-\tau M)$, where $M$ is a Hermitian positive semi-definite matrix. To recall the result, let $M$ have eigenvalues in the interval $[0, 4\rho]$, and for indices $i \neq j$, let $\xi = \lceil |i-j|/\beta \rceil$, where $ \beta $ is half band width of $ M $.  Then, the bound established in \cite{Benzi-Simoncini}, called the ``M.V. bound'', is given by:
\begin{enumerate}
\item[i)] For $\rho\tau\ge 1$ and $\sqrt{4\rho\tau}\le \xi \le 2\rho\tau$,
\[
| (\exp(-\tau M) )_{ij}|
\le
10 \exp\left(-\frac{\xi^2}{5 \rho\tau}\right).
\]
\item[ii)]
For $\xi  \ge 2\rho\tau$,
\[
| (\exp(-\tau M) )_{ij}|
\le
10 \frac{\exp(-\rho\tau)}{\rho\tau}
\left ( \frac{{\rm e}\rho\tau}{\xi}\right)^{\xi }.
\]
\end{enumerate}

\begin{figure}[t] 
     \centering
     \begin{subfigure}[b]{0.49\textwidth}
         \centering
         \includegraphics[width=\textwidth]{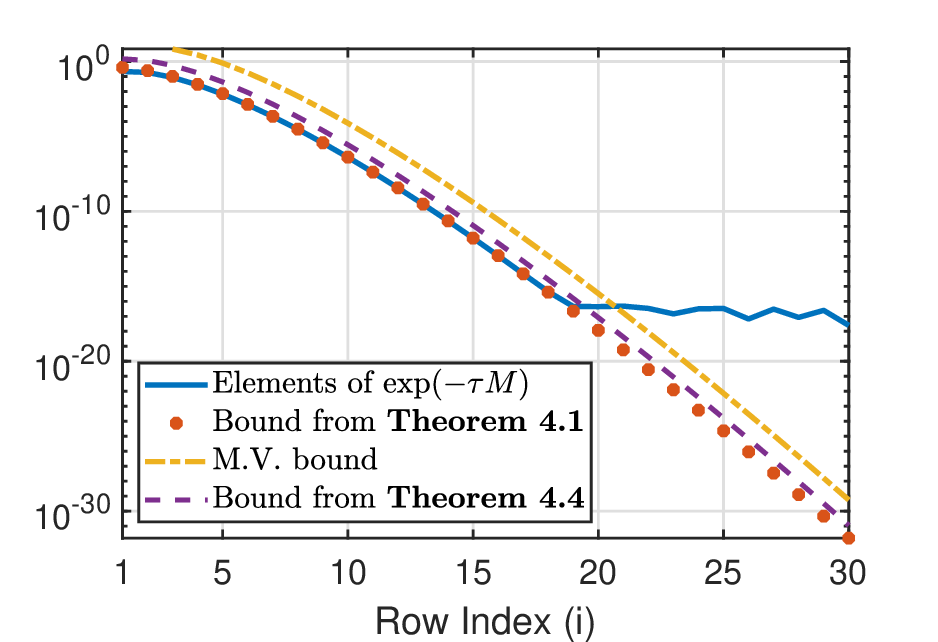}
\caption{$\tau = 1$.}
     \end{subfigure}
     \hfill
     \begin{subfigure}[b]{0.49\textwidth}
         \centering
         \includegraphics[width=\textwidth]{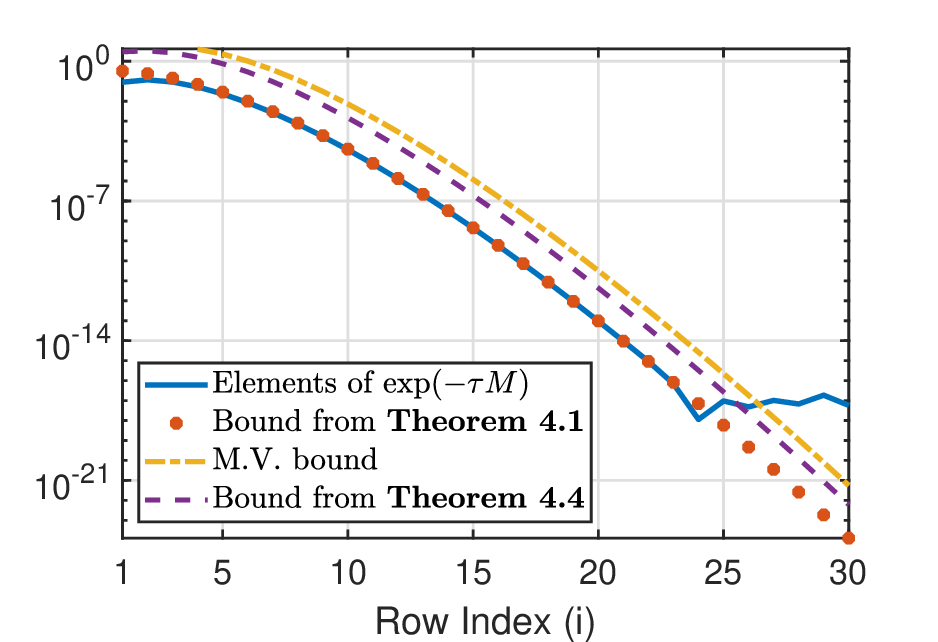}
\caption{ $\tau = 2$.}
     \end{subfigure}
     \hfill
          \begin{subfigure}[b]{0.49\textwidth}
         \centering
         \includegraphics[width=\textwidth]{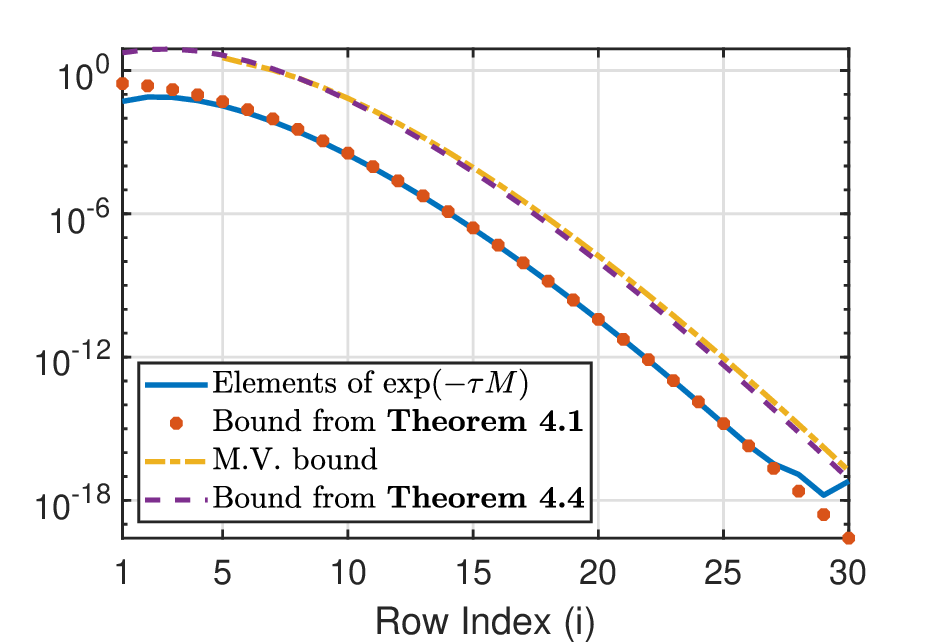}
\caption{$\tau = 3$.}
     \end{subfigure}
     \hfill
     \begin{subfigure}[b]{0.49\textwidth}
         \centering
         \includegraphics[width=\textwidth]{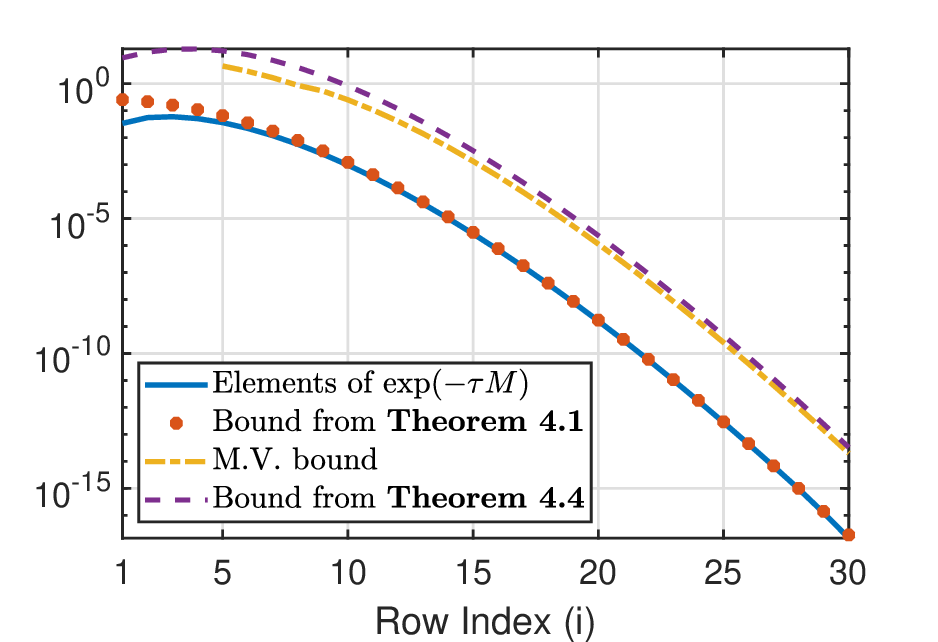}
\caption{ $\tau = 4$.}
     \end{subfigure}
        \caption{Elements $|(\exp(-\tau M))_{1:30,1}|$ and their bounds from Theorem \ref{M-Te_syme} and Theorem \ref{thm:ineq_1first2}, along with the M.V. bound for $\tau \in \{1,2,3,4\}$, and $M = M_1 - \lambda_{\min}(M_1)I$, where $M_1 = \mathrm{tridiag}(-1,4,-1)\in \mathbb{R}^{200\times 200}$ .}
 \label{Fig-T:Bound}
 \end{figure}
 In Figure \ref{Fig-T:Bound}, we illustrate elements $|(\exp(-\tau M))_{1:30,1}|$, where $M\in \mathbb{R}^{200\times 200}$, and their bounds from Theorem \ref{M-Te_syme}, from Theorem \ref{thm:ineq_1first2}, and M.V. bound for $\tau\in\{1,2,3,4\}$. In this example, the matrix $M$ is as follows: first, we take $M_1 = \mathrm{tridiag}(-1,4,-1)$, and then $M = M_1 - \lambda_{\min}(M_1) I$, where $ \lambda_{\min}(M_1) $ is the smallest eigenvalue of $ M_1 $. Finally, we set $\rho = \left(\lambda_{\max}(M_1)-\lambda_{\min}(M_1)\right)/4$, as done in \cite[Example 4.3]{Benzi-Simoncini}, and since $M$ is a tridiagonal matrix, we set $\beta = 1$. The result tells us that the bound from Theorem \ref{thm:ineq_1first2} estimates the entries better than other bounds, and the bound from Theorem \ref{M-Te_syme} performs better than the M.V. bound in most cases. However, we must mention that the bound from Theorem \ref{thm:ineq_1first2} estimates the entries with high accuracy. As we can see, for different original matrices, the high accuracy of the estimation  from Theorem \ref{thm:ineq_1first2} is preserved, which can be considered an advantage of this bound.

\subsection{Error bound of the approximation of the exponential of tridiagonal matrix}\label{sec:example:63}
In Section \ref{error:analysis:new} of this paper, in Corollary \ref{approx}, we provided a bound for $\Vert \exp(\mathbf{T}_n) - \Psi(\mathbf{T}_n) \Vert_{\infty}$, where $\Psi(\mathbf{T}_n)$ is the approximation method presented in \eqref{approx_matrix}. We also introduced another approximation method in \eqref{banded:Psi:new}, where we approximate $\exp(\mathbf{T}_n)$ by the $d$-banded matrix $\Psi^{[d]}(\mathbf{T}_n)$, with the corresponding error bound given in \eqref{bound:error:d:band}. In this experiment, we aim to test these approximation methods and the established error bounds. In this example, we consider $\mathbf{T}_n = \mathrm{tridiag}(1,-2,1) \in \mathbb{R}^{n\times n}$, which arises in the numerical solution of the heat equation.

\begin{figure}[t] 
     \centering
     \begin{subfigure}[b]{0.49\textwidth}
         \centering
         \includegraphics[width=\textwidth]{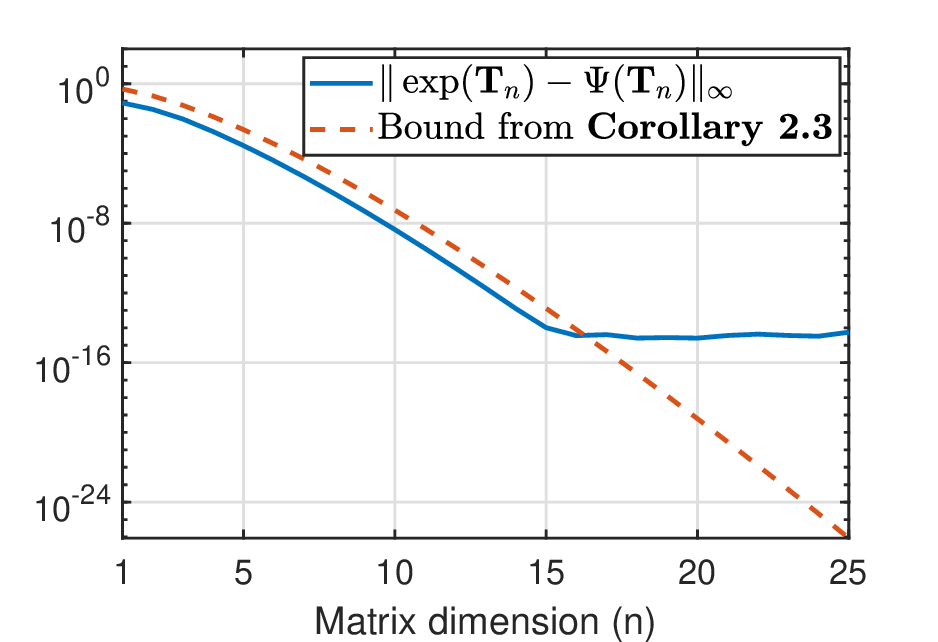}
\caption{$n = 1:25$.}\label{a-Fig63}
     \end{subfigure}
     \hfill
     \begin{subfigure}[b]{0.49\textwidth}
         \centering
         \includegraphics[width=\textwidth]{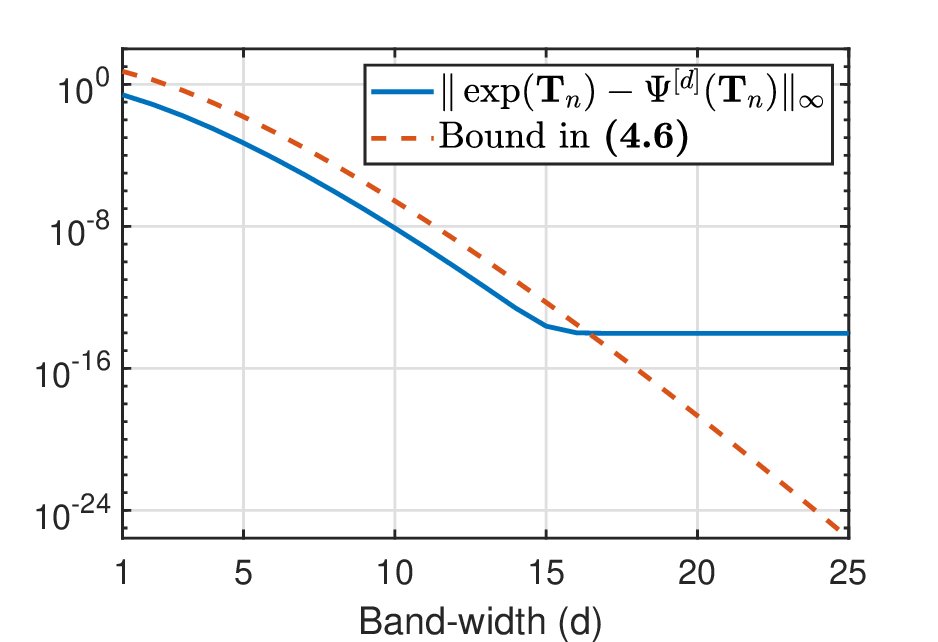}
\caption{ $n = 200$ and $d = 1:25$.}
     \end{subfigure}
     \hfill
          \begin{subfigure}[b]{0.49\textwidth}
         \centering
         \includegraphics[width=\textwidth]{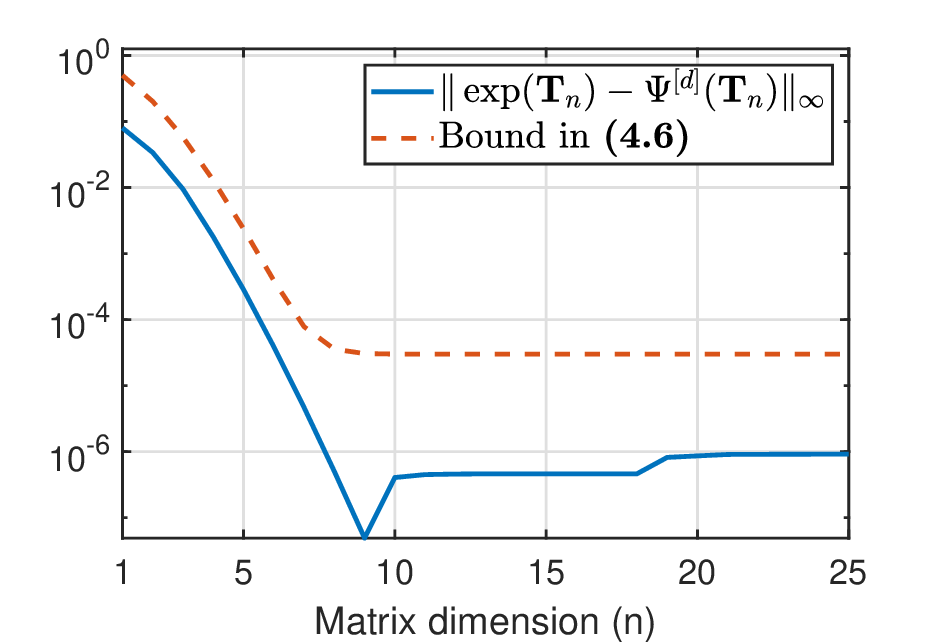}
\caption{$n = 1:25$ and $d = 8$.}
     \end{subfigure}
     \hfill
     \begin{subfigure}[b]{0.49\textwidth}
         \centering
         \includegraphics[width=\textwidth]{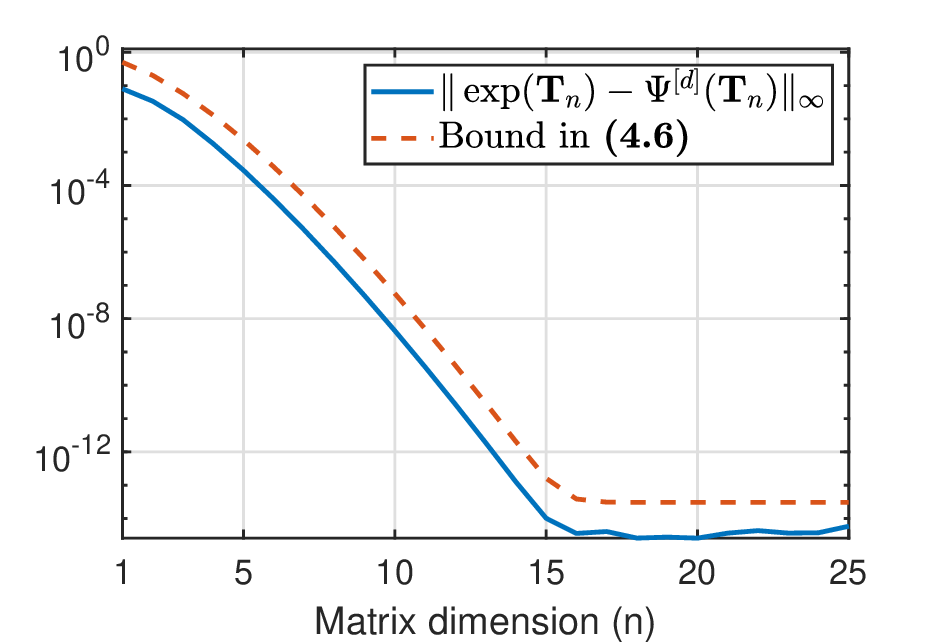}
\caption{ $n = 1:25$ and $d = 16$.}
     \end{subfigure}
\caption{Comparison of $\Vert \exp(\mathbf{T}_n) - \Psi(\mathbf{T}_n) \Vert_{\infty}$ with the bound from Corollary \ref{approx} (a), and for the other figures, comparison of $\Vert \exp(\mathbf{T}_n) - \Psi^{[d]}(\mathbf{T}_n) \Vert_{\infty}$ with the bound in \eqref{bound:error:d:band}, where $\mathbf{T}_n = \mathrm{tridiag}(1, -2, 1)$.
}\label{bound:err:Fig:3}
\end{figure}

In Figure \ref{bound:err:Fig:3}, in Figure \ref{a-Fig63}, we compare $\Vert \exp(\mathbf{T}_n) - \Psi(\mathbf{T}_n) \Vert_{\infty}$ with the bound from Corollary \ref{approx} for $n = 1:25$, while in other figures, we compare $\Vert \exp(\mathbf{T}_n) - \Psi^{[d]}(\mathbf{T}_n) \Vert_{\infty}$ with the bound in \eqref{bound:error:d:band} for different values of the matrix dimension $n$ and bandwidth $d$. As observed, both bounds are not far from the error arising from the approximation methods.

\subsection{Banded approximation method}
In this example, we examine the numerical test of the banded approximation method presented in Section \ref{sec:band:approx} and the original method introduced in \eqref{approx_matrix} for symmetric case and in Theorem \ref{Hadam} for non-symmetric case. Here, we consider tridiagonal matrices with different dimensions, where their entries are chosen randomly in MATLAB using $ \mathtt{randn(1)} $.

\begin{table}
\caption{Time (seconds) and error in MATLAB for the ``Original methods'' presented in Theorem \ref{Hadam} for non-symmetric matrices, and the method in \eqref{approx_matrix} for symmetric matrices, along with ``Banded Approx.'' presented in \eqref{band:non:sym:approx} for non-symmetric matrices, and the method in \eqref{banded:Psi:new} for the symmetric case, where for both $d = 25$, and the absolute error of the methods is measured in the infinity norm with $\mathtt{expm}$, for tridiagonal matrices with randomly chosen entries.  }
\label{tab:Top:comp}
\centering
\pgfplotstabletypeset[
every head row/.style={
before row={
\toprule
\multicolumn{1}{c||}{} & \multicolumn{1}{c|}{} & \multicolumn{2}{c|}{Original Method} & \multicolumn{2}{c|}{Banded Approx.} & $\mathtt{expm}$ \\
},
after row=\midrule,
},
every row no 6/.style={
after row=\midrule
},
every last row/.style={after row=\bottomrule},
highlightcell/.style={
postproc cell content/.append code={
\ifnum\pgfplotstablerow=#1\relax
\pgfkeysalso{@cell content/.add={$\bf}{$}}
\fi
}
},
columns={Name, 0, 2, 3, 4, 5, 6}, 
columns/Name/.style={column name=Type of matrix, string type, column type=c||}, 
columns/0/.style={column name= Matrix size, column type=c|},
columns/1/.style={column name= $\Delta$, column type=c|},
columns/2/.style={column name=Time, fixed,
},
columns/3/.style={column name=Error, column type=c|, skip rows between index={2}{2}},
columns/4/.style={
column name=Time, fixed
},
columns/5/.style={column name=Error, skip rows between index={2}{2}, column type=c|},
columns/6/.style={
column name=Time,
fixed,
},
]{
Name 0 2 3 4 5 6
{}  500        0.010            2.162e-14            0.009             2.162e-14          0.023
{} 1000        0.025            1.68e-13            0.025             1.68e-13          0.043
{} 3000        0.21            2.742e-13            0.21             2.742e-13          0.702
Symmetric 5000        0.552            1.067e-11            0.548             1.067e-11          2.298
{} 7000        1.207            1.749e-12            1.313             1.749e-12          5.742
{} 9000        2.023            1.959e-12            2.228             1.959e-12         11.509
{} 11000        3.438            1.939e-12            3.436             1.939e-12         21.685
{} 500  0.012            2.33e-13            0.009             2.34e-13          0.066
{} 1000        0.031                  NaN            0.030             8.218e-16          0.134
{} 3000        0.21                  NaN            0.21             6.350e-16          2.160
Non-symmetric  5000        0.877            4.779e-14            0.916             4.779e-14         10.355
{} 7000        1.910            3.31e-16            1.868             3.32e-16         18.094
{} 9000        3.023                  NaN            3.047             5.216e-16         45.63
{} 11000        4.507                  NaN            4.585             6.065e-16         92.962
}
\end{table}

In Table \ref{tab:Top:comp}, we show the results obtained by MATLAB, including the time and error of the original methods, as well as the time and absolute error of the banded approximation with band-width $d = 25$, for symmetric and non-symmetric tridiagonal matrices. The $\mathtt{expm}$ function in MATLAB has been used to calculate the absolute  error of each method with the infinity norm.
As can be seen, for all matrix sizes of symmetric matrices, both methods provide a good approximation with very small error.
However, in the case of the non-symmetric matrix, due to the structure of the matrix $\mathbf{F}_n$ in Theorem \ref{Hadam}, some of its entries become $\mathtt{Inf}$ in MATLAB for certain values of $a$ and $c$ (i.e., values larger than $\mathtt{1e309}$). Similarly, in $\Psi(\mathbf{T}_n)$, some entries become extremely small (i.e., smaller than $\mathtt{1e-324}$), which, when used in the corresponding positions of the Hadamard product, result in $\mathtt{NaN}$. Consequently, the error in the table is displayed as $\mathtt{NaN}$. This problem is solved in the banded approximation method.

\subsection{Crank--Nicolson method}\label{sec-CN}
Here, we compare the Crank--Nicolson method with method presented in $ \eqref{num} $ for two different initial conditions to test efficiency and stability of the method in this paper.

\subsubsection*{Periodic initial condition}
Consider the one-dimensional heat equation on $(0,1)$ with homogeneous boundary conditions, where $a = 1$, and the following initial condition:
\begin{equation}\label{cond:eq:sin}
u(x,0)=\sin(\pi x).
\end{equation}
The exact solution of this heat equation with these conditions is $u(x,t) = e^{-\pi^2 t} \sin(\pi x)$. 

In Figure \ref{fighq}, the absolute errors, using infinity norm, of the presented method in Section \ref{sec:heat:eq1}, method in \eqref{num}, and the Crank--Nicolson method are shown. In this figure, we illustrate the methods for $\Delta x = 0.05$ and $d = 8$, with $\Delta t = 0.04$ (left) and $\Delta t = 0.05$ (right) in time steps. As shown in the figure, the errors in both methods are approximately similar because both methods are of order $2$ in space.

\begin{figure}
\begin{center}
\includegraphics[width=0.49\textwidth]{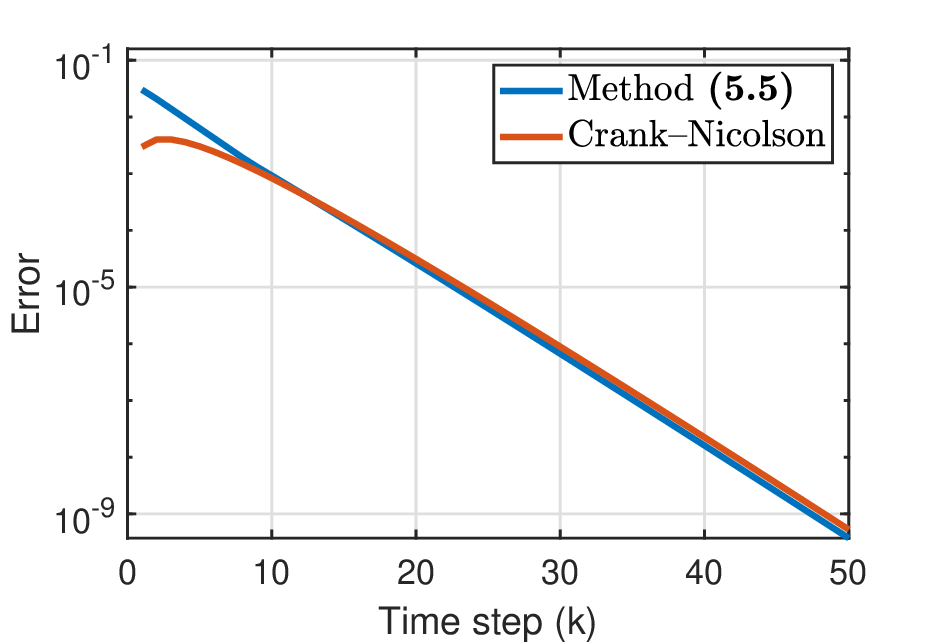}
\includegraphics[width=0.49\textwidth]{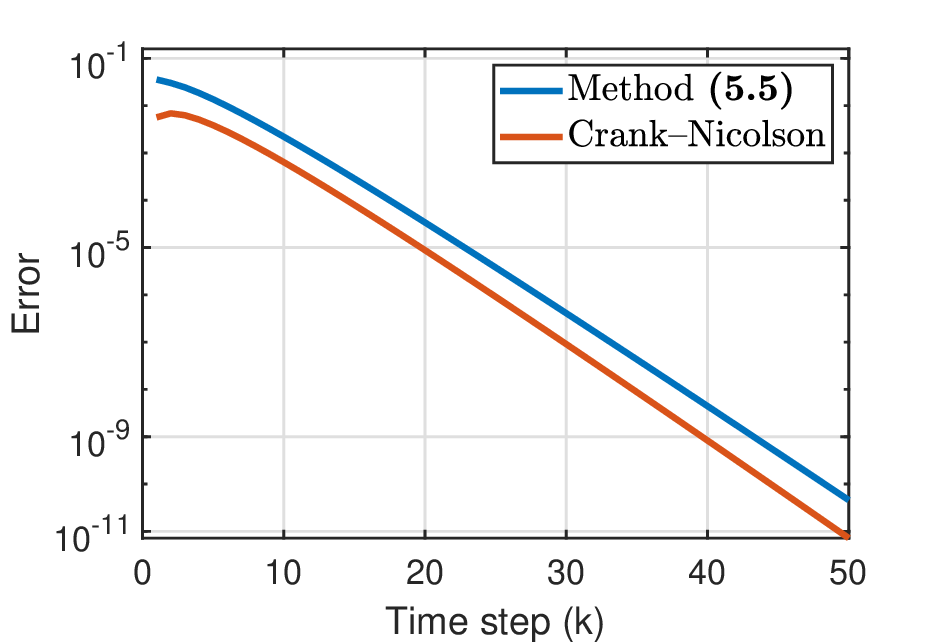}
\caption{The absolute  infinity error of the method in \eqref{num} and the Crank-Nicolson method at $ t_k = k \Delta t $ for $ \Delta t = 0.04 $ (left) and $ \Delta t = 0.05 $ (right), with $ \Delta x = 0.05 $ and $ d = 8 $, where the initial condition is given by $ \eqref{cond:eq:sin} $.}\label{fighq}
\end{center}
\end{figure}

\subsubsection*{Discontinuous initial condition}
 Here, let us consider the following initial condition:
\begin{equation}\label{initail:x:05:cond}
u(x,0) = \begin{cases} 
1,~ & \text{if } x = 0.5, \\
0,~ & \text{otherwise}.
\end{cases}
\end{equation}

In Figure \ref{Maximum}, the results of the Crank--Nicolson method are compared with those of the proposed method in \eqref{num}. From \cite{Morton}, we know that the Crank--Nicolson method suffers from oscillations when $ \mu > 1 $. Here, we set $ \Delta x = 1/21 $, $ \Delta t = 0.005 $, and $ \mu = 2.205 $ with $d =8$. As seen in Figure \ref{Maximum}, the Crank--Nicolson method exhibits oscillatory behavior. As stated in \cite{Morton}, when $ \mu > 1 $, the Crank--Nicolson method dose not preserve maximum principle,  while, according to the stability property in Theorem \ref{thm:stable:max}, the proposed method provides non-oscillatory solutions.

\begin{figure}
\begin{center}
\includegraphics[width=0.49\textwidth]{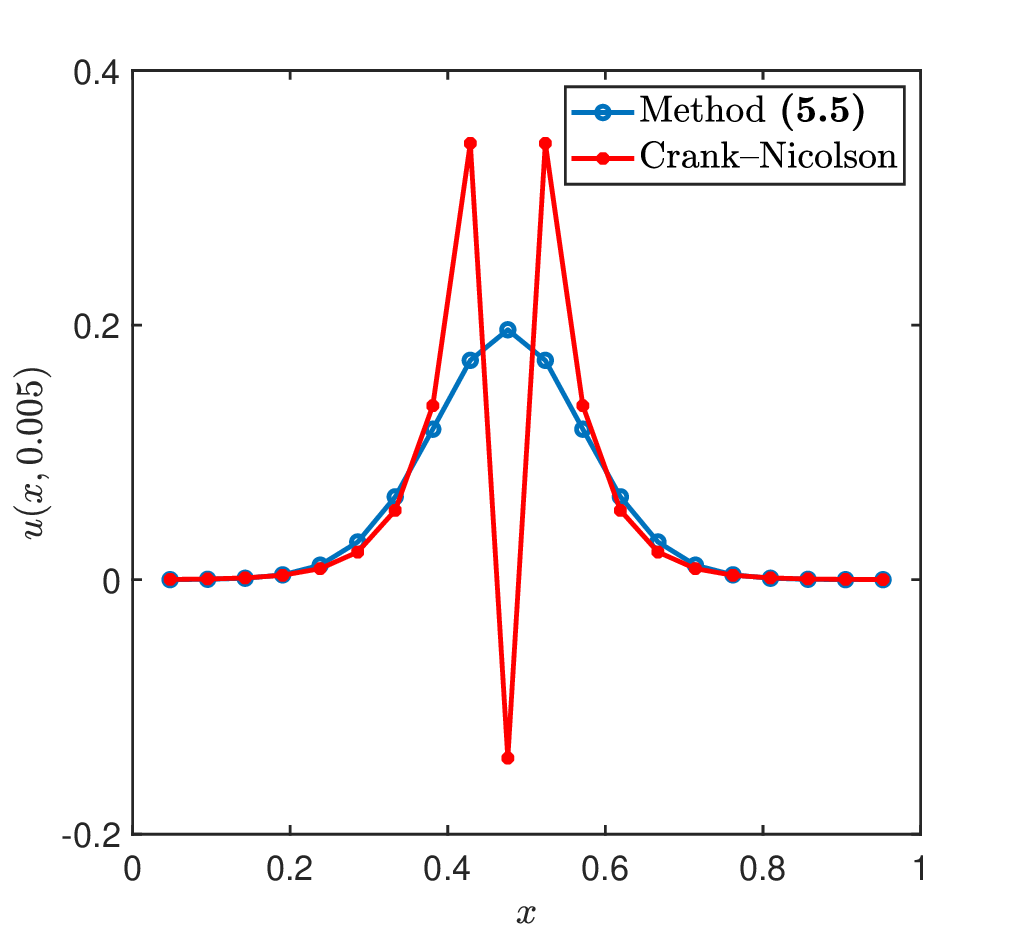}
\includegraphics[width=0.49\textwidth]{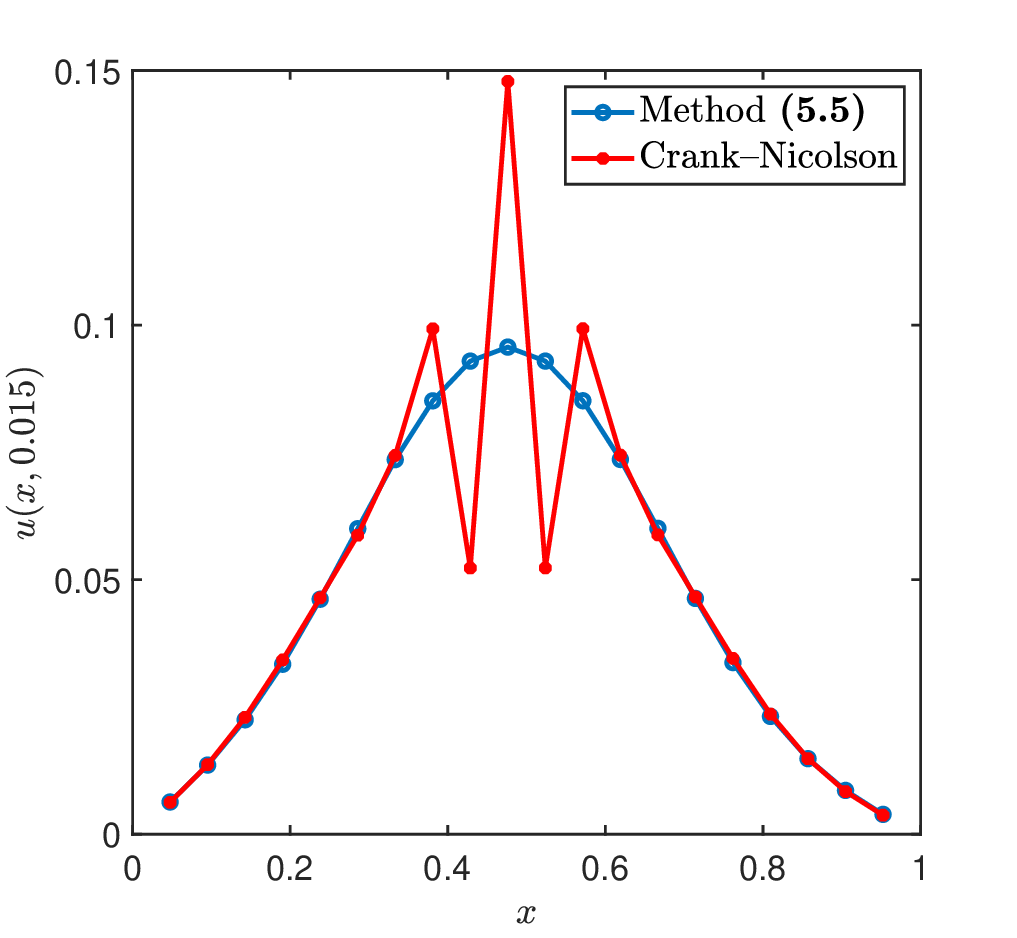}
\caption{The numerical solutions of the one-dimensional heat equation with the initial condition \eqref{initail:x:05:cond}, by using the Crank--Nicolson method and the presented method in \eqref{num} at $ t = 0.005 $ (left) and $ t = 0.015 $ (right), with $ \mu = 2.205 $, $ \Delta x = 1/21 $, and $ d = 8 $.  }\label{Maximum}
\end{center}
\end{figure}

\subsection{Gauss Runnge--Kutta method of order 4}
In this experiment, we aim to compare the Gauss Runnge--Kutta method of order 4  (GRK4) with method presented in \eqref{num}. The Butcher tableau for GRK4 is given by:
\[
\begin{array}{c|cc}
\frac{3-\sqrt{3}}{6} & \frac{1}{4} & \frac{3-2\sqrt{3}}{12} \\[4pt] 
\frac{3+\sqrt{3}}{6} & \frac{3+2\sqrt{3}}{12} & \frac{1}{4} \\[4pt] 
\hline
\\[-8pt]
 & \frac{1}{2} & \frac{1}{2} \\
\end{array}
\]

In this example, we again consider the heat equation with the same conditions assumed in the previous example in Section~\ref{sec-CN}, with initial condition $u(x,0) = \sin(\pi x)$. The exact solution of this problem under these conditions is $u(x,t) = e^{-\pi^2 t} \sin(\pi x)$. By considering the GRK4 method, for solving \eqref{eq:transf:heat} with initial condition $\mathrm{U}_0 = \left(\sin(\pi\Delta x),\ldots,\sin(n\pi\Delta x)\right)^T$, we have:
\[
\mathbf{U}_{G}^{(k+1)} = \mathbf{U}_{G}^{(k)} + \frac{\Delta t}{2} V_1^{(k)} + \frac{\Delta t}{2} V_2^{(k)},
\]
where $V_1^{(k)}$ and $V_2^{(k)}$ are computed as follows:
\begin{equation}\label{vector:V1andV2}
\begin{aligned}
V_1^{(k)} &= \mathbf{K}_n\left(\mathbf{U}_{G}^{(k)}+\frac{\Delta t}{4} V_1^{(k)}+\Delta t\frac{3-2\sqrt{3}}{12} V_2^{(k)}\right),\\
V_2^{(k)} &= \mathbf{K}_n\left(\mathbf{U}_{G}^{(k)}+\Delta t\frac{3+2\sqrt{3}}{12} V_1^{(k)}+\frac{\Delta t}{4} V_2^{(k)}\right).\\
\end{aligned}
\end{equation}
In \eqref{vector:V1andV2}, we need to solve a linear system of size $ (2n) \times (2n) $, where $V_1^{(k)}$ and $V_2^{(k)}$ are unknowns that must be determined at each step. Although here GRK4 provides slightly higher accuracy than method \eqref{num} (see Figure \ref{a-Fig6355}), both methods use a semi-discretization of the second-order spatial derivative and thus have order $\mathcal{O}((\Delta x)^2)$.
The overall complexity of the GRK4 is approximately $ \mathcal{O}(kn^2) $, whereas the complexity of method \eqref{num} is $ \mathcal{O}(2kn\log(n)) $ (see Figure \ref{b-Fig63525}). Here, we set $d = n - 1$ (i.e., a full matrix) in method \eqref{num}.
In Figure \ref{RK:examp:Fig:3}, we illustrate the results for both methods with $ \Delta x = 1/(n+1) $ and $ \Delta t = 0.04 $. In Figure \ref{a-Fig6355}, we show the absolute errors for $ n = 19 $ and time steps $ k = 1:50 $. In Figure \ref{b-Fig63525}, we show the computational time required for both methods at time $ t_{10} = 0.4 $ ($ k = 10 $) and $ n = 1000:4000:25000 $. 
\begin{figure}[t] 
     \centering
     \begin{subfigure}[b]{0.49\textwidth}
         \centering
         \includegraphics[width=\textwidth]{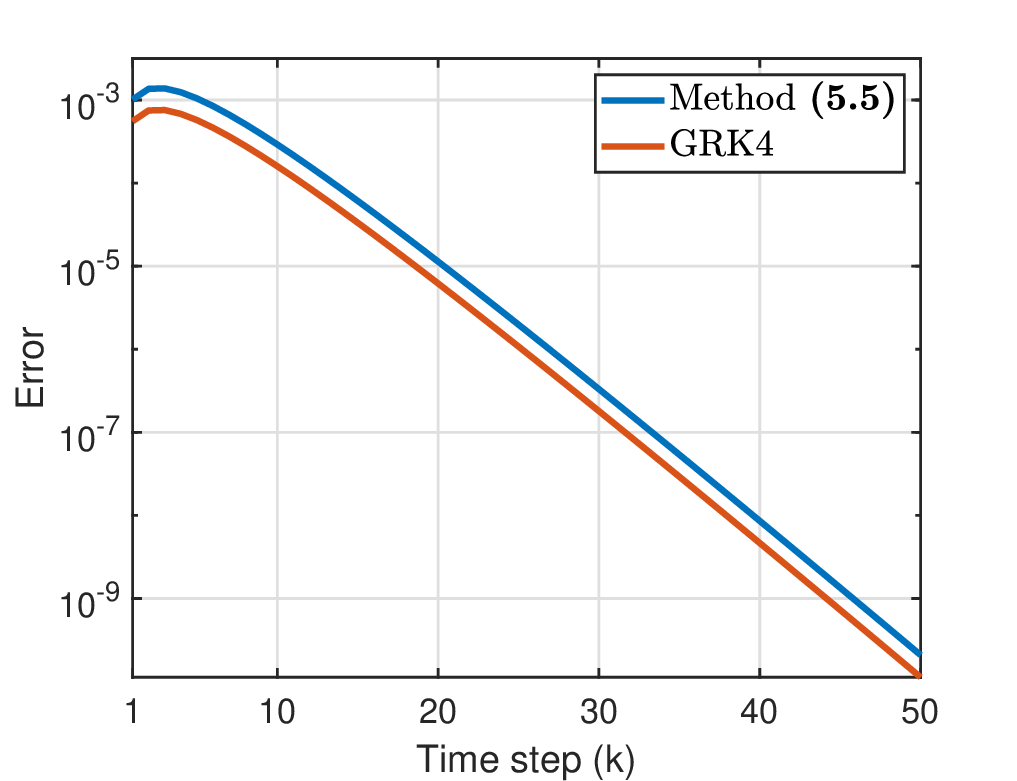}
\caption{$n = 19$ and $k = 1:50$.}\label{a-Fig6355}
     \end{subfigure}
     \hfill
     \begin{subfigure}[b]{0.49\textwidth}
         \centering
         \includegraphics[width=\textwidth]{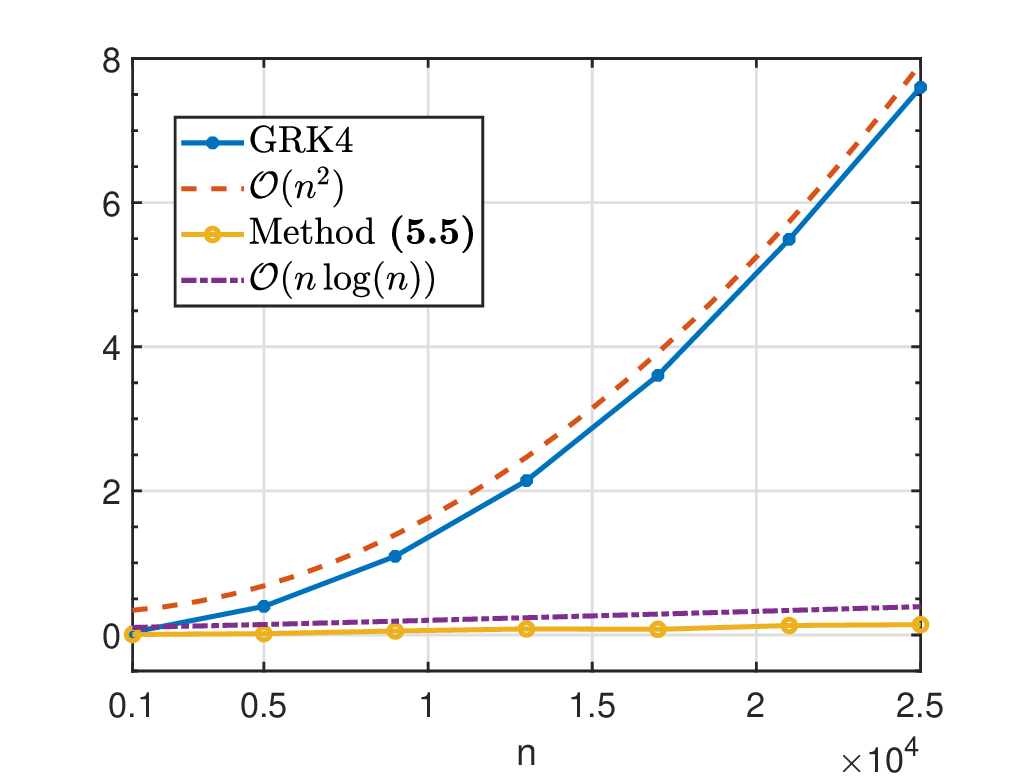}
\caption{ $n = 1000:4000:25000$ and $k = 10$.}\label{b-Fig63525}
     \end{subfigure}
\caption{Comparison of the Gauss Runnge--Kutta method of order 4  (GRK4) with the method presented in $ \eqref{num} $, where in both methods, $ \Delta x = \frac{1}{n+1} $ and $ \Delta t = 0.04 $.
}\label{RK:examp:Fig:3}
\end{figure}

\section{Conclusion}\label{Conclusion}
We analyzed an existing method for approximating the exponential of a symmetric tridiagonal matrix using Bessel functions of the first kind and presented a new and sharp error bound for this approximation. We extended this method to non-symmetric tridiagonal matrices using the Hadamard (element-wise) product and derived an error bound for it. In order to reduce the time complexity and provide a dimension-independent approach, we established bounds for the entries of the matrix exponential of a tridiagonal matrix and used them to approximate the matrix exponential by a banded matrix. By using these results, we proposed a method for solving the heat equation, which unconditionally preserves the maximum principle. Finally, we presented numerical results for the methods introduced here and the existing ones, which demonstrated the efficiency of our approach and the superiority of the obtained bounds. 

\end{document}